\documentclass{amsart}
\usepackage{amsmath,amsthm,amsfonts,amssymb,amscd,mathrsfs}
\usepackage{psfrag,graphicx}

\usepackage{epic,eepic}
\usepackage{color}


\thanks{2000 {\it Mathematics Subject Classification}.  37C15, 34C28, 37A10}

 \keywords{Topological entropy, periodic orbit,
equivalent flow}

\theoremstyle{plain}
\newtheorem{main}{Theorem}

\newtheorem{Thm}{Theorem}[section]
\newtheorem{Lem}[Thm]{Lemma}
\newtheorem{Prop}[Thm]{Proposition}

\theoremstyle{remark}
\newtheorem{Def}[Thm] {Definition}
\newtheorem{Rem}[Thm] {Remark}
\newtheorem{Con}{Conjecture}

\newtheorem{Que}[Thm] {Question}

\long\def\begcom#1\endcom{}

\newcommand{\Leb}{\operatorname{Leb}}

\newcommand{\orb}{\operatorname{orb}}
\newcommand{\length}{\operatorname{\length}}

\newcommand{\Diff}{\operatorname{Diff}}

\newcommand{\cl}{\operatorname{cl}}

\def\Diff{\operatorname{Diff}}

\def\supp{\operatorname{supp}}

\def\id{\operatorname{id}}
\def\length{\operatorname{length}}





\begin{document}

\title[Entropy and periodic orbits ]
      {Entropy and periodic orbits for  equivalent smooth flows}

\author[Liao, Sun]
{Gang Liao$^{\dag}$, Wenxiang Sun$^{\ddag}$}

\thanks{$^{\ddag}$ Sun is supported by National Natural Science Foundation of China  \# 10831003 and
Education Ministry of China}

\thanks{$^{\dag, \ddag}$
School of Mathematical Sciences, Peking University, Beijing 100871,
China} \email{liaogang@math.pku.edu.cn}
\email{sunwx@math.pku.edu.cn}

\date{May, 2011}

\maketitle

\begin{abstract}
Given any $K>0$, we construct two  equivalent $C^2$ flows, one of
which has positive topological entropy larger than $K$ and admits
zero as the exponential growth of periodic orbits, in contrast,  the
other has zero topological entropy and super-exponential growth of
periodic orbits. Moreover we establish a $C^{\infty}$ flow on
$\mathbb{S}^2$ with super-exponential growth of periodic orbits,
which is also equivalent to another flow with zero exponential
growth of periodic orbits. On the other hand, any two dimensional
flow has only zero topological entropy.
\end{abstract}


\section{Introduction}

 It is a major goal in the theory of dynamical systems to determine
the mechanisms which create deterministic chaos. One key ingredient
causing chaotic behavior is the positivity  of entropy, and the
larger the entropy is, the more complicated the dynamics. It is of
interesting to calculate  the entropy for particular maps under
study, but any of the standard definitions of entropy makes this a
difficult task. A classical way for proving that a map  $f$ has
positive topological entropy is to show that the number of periodic
orbits of period $ n$ for $f$ grows exponentially fast when
$n\rightarrow \infty$. This motivation  came from many  analysis and
summary  on dynamical structures.  In his prize essay \cite{Po1}, H.
Poincar{\'e} was the first to imagine around 1890 the existence of
transverse homoclinic intersections, that later was proved to be the
limit of infinitely many periodic points by G. D. Birkhoff
\cite{Birkhoff}. In 1965, S. Smale introduced a general geometrical
model: Horseshoe contains all the complicated phenomena discovered
by Poincar{\'e} and Birkhoff, and can also be described by symbolic
coding.  Indeed, homoclinic intersections give birth to very rich
dynamics: positive topological entropy  and infinity periodic
points. Actually, the coexistence of the positive entropy and the
positive exponential growth of periodic points has been established
for open dense systems  \cite{ Pujals-Sambarino, Bonatti-Gan-Wen,
Crovisier}. So, in most situations, ``positive topological entropy"
is synonymous of `` many periodic orbits ".

Let $M$ be a compact Riemannian manifold without boundary. Denote by
$\Diff^r(M)$ the set of $C^r$ diffeomorphisms of $M$ and,
 by $\mathcal{X}^r(M)$ the set of $C^r$ vector fields on $M$, both endowed with the $C^r$ topology, respectively.

For $f\in \Diff^r(M)$, denote  the set of isolated periodic points
of period $n$ (i.e. the isolated fixed points of $f^n$ ) by
$$P_n(f) =\{ \,\,\mbox{isolated}\,\, x\in M \,\mid f^n(x)=x\, \},$$
and define the exponential growth rate of periodic points by
$$EP(f)
=\limsup_{n\rightarrow +\infty} \frac{1}{n}\log \,\sharp\,
P_{n}(f),$$ where $\sharp A$ is the cardinal number of a set $A$. In
1978,  Bowen \cite{Bowen1} asked the following
 question:
\begin{Que}Is the property that
$$EP = h$$
generic with respect to the $C^r$ topology?
\end{Que}
For Axiom A systems \cite{Bowen3, Bowen, Bowen2} one in fact has
$$EP = h.$$   Beyond uniform
hyperbolicity, Katok \cite{Katok} stated that, if $f$ is a
$C^{1+\alpha}$ diffeomorphism of a compact surface $S$ with positive
topological entropy, then $EP(f) \geq h(f)$. For any hyperbolic
ergodic measure $\mu$, the authors and Tian \cite{Liao-Sun-Tian}
established the equality between metric entropy and the exponential
growth rate of those periodic measures approximating $\mu$.  Exactly
in broad situations, due to the absence of uniform hyperbolicity,
the periodic orbits can grow much faster than entropy. Linking with
a conjecture of Palis \cite{Palis},  we mention two well known
obstructions for the hyperbolicity: homoclinic tangencies
\cite{Newhouse} and heterodimensional cycles \cite{Abraham}. In
\cite{Kaloshin1} Kaloshin showed super-exponential growth of
periodic orbits for a residual subset in some $C^r$-domain
($r\geq2$) with persistent homoclinic tangencies. In \cite{BDF}
Bonatti, D{\'\i}az and Fisher proved super-exponential growth of
periodic points for homoclinic classes with persistent
heterodimensional cycles.

In the content of density,   in 1965 Artin and  Mazur \cite{AM}
proved that: there exists a dense set $\mathcal{D}$ of $C^r$ maps
such that for any map $f\in \mathcal{D}$,  the number $\sharp
P_n(f)$ grows at most exponentially with $n$ (see also
\cite{Kaloshin2} for an extension concerning hyperbolic periodic
points). For a vector field $X$, use  $\phi_X$ to write the flow
induced by $X$. Set
$$P_t(\phi_{X}) =\{\,\,\mbox{isolated}\,\orb(\phi_{X}, x)\mid \phi_{X}(x,0)=\phi_{X}(x,s)\mbox{ for some }0\leq s \leq t \}$$ and
define the exponential growth rate of periodic orbits by
$$EP(\phi_{X})
=\limsup_{t\rightarrow +\infty} \frac{1}{t}\log \,\sharp\,
P_{t}(\phi_{X}).$$  Artin and Mazur \cite{AM} asked the following
question  for vector fields:
\begin{Que}Does the property that $EP(\phi_{X})<\infty$ hold for a
dense subset of $\mathcal{X}^r(M)$?
\end{Que}

As we know, this  question is far from being resolved, because the
approaches concerning diffeomorphisms don't apply directly to flows.
To continue the story of periodic orbits for dense systems we are in
a position to understand more on the growth of periodic orbits of
flows.

 Two flows
$\phi, \,\psi$ defined on a smooth manifold $M$ are equivalent if
there exists a homeomorphism $\pi$ of $M$ that sends each orbit of
$\phi$ onto an orbit of $\psi$ while preserving the time orientation
:
$$\{\phi(x,t) \mid \,\,\,t\in \mathbb{R}\}=\{\pi^{-1}\psi(\pi(x),t)\mid\,\, t\in \mathbb{R}\},\quad \forall\,x\in M.$$
 Going back to the study of  Lorenz
attractors \cite{Guckenhei-Willians, Willians} the kneading
sequences were introduced to be invariants for equivalence. In
general cases, it is not easy to find quantities preserved by
equivalence.  Topological entropy of a flow $\phi$ indicates, as
usual, that for its time one map $\phi_1$, that is,
$h(\phi)=h(\phi_1)$. Topological entropy is an invariant for
equivalent homeomorphisms (Theorem 7.2 in \cite{Walters}), while
finite non-zero topological entropy for a flow cannot be an
invariant because its value is affected by time reparameterization.
For equivalent flows without fixed points the extreme value  0 and
infinite  entropy are invariant, while the sign of finite non-zero
entropy  are preserved (see \cite{Ohno}, \cite{SunVar},
\cite{Thomas1}, \cite{Thomas2}). In equivalent flows with fixed
points there exists a counterexample, constructed by Ohno
\cite{Ohno}, showing that neither 0 nor $\infty$ topological entropy
is preserved by equivalence. The two flows constructed in
\cite{Ohno} are suspensions of a transitive subshift and thus are
not differentiable. Note that a differentiable flow on a compact
manifold cannot have $\infty$ entropy (see Theorem 7.15 in
\cite{Walters}).  Ohno \cite{Ohno} in 1980 asked  the following:
\begin{Que}Is 0 topological entropy an invariant for
equivalent differentiable flows? \end{Que}

In \cite{SunYoungZhou},  Sun, Young and Zhou constructed two
equivalent $C^\infty$  flows with a singularity, one of which has
positive topological entropy while the other has zero topological
entropy. This gives a negative answer to Ohno's question.

Likewise as entropy,  $EP=0$ or $EP=\infty$ is invariant for
equivalent homeomorphisms and also for equivalent flows without
fixed points, see \cite{Ohno, SunVar}. For topological flows with
fixed points, neither extreme growth rate, $EP=0$ nor $EP=\infty$ is
preserved for equivalence \cite{ SunZhang}. Moreover, there exists a
pair of equivalent topological flows with fixed points such that one
of which has $\infty$ topological entropy and $0$ growth rate of
periodic orbits but the other has $0$ topological entropy and
$\infty$ growth rate of periodic orbits \cite{ Sun-Zhang-Zhou}.

In the present paper we are going to study in the world  of
smoothness and  consider the following question:

\begin{Que}Is 0 or $\infty$ value of $EP$ invariant for
equivalent differentiable flows? \end{Que}

 There are fruitful
dynamical properties varying in the
 differentiability, for instance, symbolic extension which  is exactly a suitable
 candidate to ``measure" the dependence of
entropy structure on the smoothness of underlying systems.  Here we
call a system $(M,f)$ has a symbolic extension if there is a
subshift $(Y,g)$ over finite alphabets and a continuous surjection
$\pi: Y\rightarrow M$ such that $f\circ\pi=\pi\circ g$.
\begin{eqnarray*}Y&\stackrel{g}{\longrightarrow}&Y\\
 \,\,\,\pi\big{\downarrow}&&\big{\downarrow}\pi\\
M&\stackrel{f}{\longrightarrow}&M
 \end{eqnarray*}
A symbolic extension $(Y,g,\pi)$
 for which $h_{\nu}(g)=h_{\mu}(f)$ for every $g$-invariant measure $\nu$ with $\pi_{*}\nu=\mu$ is
 viewed as a good model and is called a principal symbolic
 extension.
In the context of $C^{\infty}$, Newhouse \cite{New89} showed upper
semi-continuity of metric entropy and  Buzzi \cite{Buzzi} further
established asymptotical entropy expansiveness which, together with
a criterion of Boyle, D. Fiebig and U. Fiebig \cite{BFF} : if $f$ is
asymptotically entropy expansive, then $f$ has   a principal
symbolic extension, implies all $C^{\infty}$ maps admit a principal
symbolic extension. In \cite{DN} Downarowicz and Newhouse
constructed a $G_{\delta}$ set of $C^1$ area-preserving
diffeomorphisms in which everyone has no symbolic extension.  They
described the entropy structure of $C^{2}$ differentiability  by
following:
\begin{Con}Every $C^2$ map has a symbolic extension.  \end{Con}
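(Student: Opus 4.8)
\smallskip
\noindent\emph{Sketch of a possible approach.}\quad The plan is to attack the conjecture through the Boyle--Downarowicz theory of symbolic extension entropy, which reduces the \emph{existence} of a symbolic extension to a structural statement about invariant measures, and then to extract that structure from the $C^2$ hypothesis by way of Yomdin's $C^r$ reparametrization estimates. First I would recall that, by the Boyle--Downarowicz variational principle, a continuous self-map $f$ of a compact manifold $M$ admits a symbolic extension if and only if the entropy structure of $f$ --- a canonical increasing sequence $\{h_k\}_{k\ge 1}$ of upper semicontinuous functions on the simplex $\cM_f(M)$ of invariant probability measures, converging pointwise to the (generally non-upper-semicontinuous) entropy function $\mu\mapsto h_\mu(f)$ --- admits a \emph{finite affine superenvelope}: a bounded, nonnegative, affine, upper semicontinuous function $E$ dominating $h_\bullet$ and satisfying Downarowicz's superenvelope recursion relative to $\{h_k\}$. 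Everything is thereby transferred to (a) how fast $h_k\uparrow h_\bullet$ for a $C^2$ system, and (b) converting a decay estimate into such an $E$.

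The analytic heart is step (a). I would use Newhouse's implementation of Yomdin theory: for $f\in C^r$ with $r>1$ one controls the volume growth of iterated smooth discs, which yields finiteness of the tail (topological conditional) entropy, $h^*(f)\le\tfrac{\dim M}{r}\,R(f)<\infty$, where $R(f)=\lim_n\tfrac1n\log\max_{x}\|D_xf^n\|$. Finiteness of $h^*(f)$, however, already holds in class $C^1$, while symbolic extensions may then fail \cite{DN}; so the real point is to prove a \emph{localized} and \emph{uniform} version. Concretely, tracking the reparametrization lemma along Bowen balls, I would aim to show that near any $\mu\in\cM_f(M)$ the gap $h_\bullet-h_k$ is bounded, up to an error $o_k(1)$, by a fixed multiple of $\chi^+(\mu)=\int\log^+\!\|D_xf\|\,d\mu$ (more precisely by the sum of the positive Lyapunov exponents, divided by $r-1$). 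This is the step in which $C^2$ is used essentially --- in class $C^1$ no such rate is available.

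Then, with this decay in hand, I would construct the superenvelope explicitly as the affine upper semicontinuous hull of $\mu\mapsto h_\mu(f)+c_r\,\chi^+(\mu)$ for the constant $c_r$ coming from step (a) (with $c_2<\infty$), and check directly that it satisfies the superenvelope recursion: this verification reduces to the genuine upper semicontinuity and affinity of $\mu\mapsto\chi^+(\mu)$ (which hold because $\log^+\|Df\|$ is continuous) together with the uniform rate. The expected output is $h_{\mathrm{sex}}(f)\le\sup_{\mu\in\cM_f(M)}\bigl(h_\mu(f)+c_r\chi^+(\mu)\bigr)<\infty$, hence a symbolic extension; and, when $\chi^+\equiv 0$, this recovers the principal symbolic extensions produced by Buzzi's asymptotic entropy expansiveness \cite{Buzzi} together with the criterion of \cite{BFF}.

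The main obstacle is passing through step (a) with genuine uniformity over $\cM_f(M)$: Yomdin's reparametrization bounds, as usually stated, deteriorate with the number of iterates and are not uniform over the space of invariant measures, and turning them into a true pointwise decay rate for $h_\bullet-h_k$ dominated by a \emph{continuous affine} functional of the measure is precisely what is missing in dimension $\ge 3$. (In dimension one the programme was carried out by Downarowicz and Maass, and on surfaces by Burguet; the general $C^2$ case remains open, which is why it appears here only as a conjecture.) A secondary difficulty is that one cannot shortcut step (b): finiteness of the tail entropy by itself is known not to imply the existence of a symbolic extension, so the full strength of a measure-by-measure, affinely dominated decay estimate is really needed.
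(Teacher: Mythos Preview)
The paper contains no proof of this statement: it is recorded as a \emph{conjecture}, with the remark that it has been established only in dimension one (Downarowicz--Maass) and for surface diffeomorphisms (Burguet). There is therefore nothing in the paper to compare your attempt against.

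That said, your sketch is an accurate summary of the one known strategy --- reduce to the Boyle--Downarowicz characterization via entropy structures and superenvelopes, and try to manufacture the needed affine superenvelope from a Yomdin-type tail estimate --- and you correctly isolate the genuine gap: the uniform, measure-by-measure decay of $h_\bullet-h_k$ dominated by a continuous affine functional (essentially the positive Lyapunov exponents divided by $r-1$) is exactly what is not known in dimension $\ge 3$. Your own final paragraph already says this. So the proposal is not a proof but an honest outline of the programme, with the obstruction named; as a proof it has the gap you yourself flag, and the paper offers no way around it.
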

This conjecture has been proven by Downarowicz-Maass \cite{DM} for
interval maps and by Burguet \cite{Burguet2} for surface maps.
Altogether, we have the following intuitions to reveal various differentiability:\\
 \begin{eqnarray*}
C^1 \,\,\,\mbox{differentiability} &\longleftrightarrow&
\mbox{generically no symbolic extension}\\[2mm]
C^2  \,\,\,\mbox{differentiability} &\longleftrightarrow&
\mbox{ symbolic extension}\quad\mbox{( generically not principal)}\\[2mm]
C^{\infty} \,\,\,\mbox{differentiability} &\longleftrightarrow&
\mbox{ principal symbolic extension}.
 \end{eqnarray*}

   We call a flow $\phi$ has  a ( principal ) symbolic extension if its time one map $\phi(1,\cdot)$ has  a ( principal ) symbolic
   extension.
    As we have stated,  every
$C^{\infty}$ system has a principal symbolic extension.  It is well
known that symbolic systems with finite alphabets have finite $EP$.
Our first theorem says that the finiteness of $EP$ can't be
inherited from its principal symbolic extension although the under
system agrees the same entropy with the upper symbolic extension.
This means that  entropy is not enough to exhaust the difference of
complexity even if in the category of $C^{\infty}$.  Furthermore, we
are going to show that the extreme growth rate of periodic orbits
can't be preserved for orbit equivalent  $C^{\infty}$ flows.

\begin{main}\label{Main theorem1} There exist two
 equivalent $C^{\infty}$ flows $\varphi$ and $\widehat{\varphi}$ on
the sphere $\mathbb{S}^2$ satisfying:
$$EP(\varphi)=0,\,\,\,EP(\widehat{\varphi})=\infty.$$
\end{main}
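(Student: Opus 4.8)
The plan is to realize both flows as time-changes of a single $C^\infty$ vector field $X$ on $\mathbb{S}^2$ whose periodic set is a countable nest of circles collapsing onto a degenerate singularity. Since an orbit equivalence carries isolated periodic orbits to isolated periodic orbits and merely reparametrizes them, the two flows will share the same periodic set while their periods can be made to differ drastically. Concretely, I would first build a $C^\infty$ vector field $X$ on $\mathbb{S}^2$ with exactly two singularities, a hyperbolic source $q$ (north pole) and a fixed point $p$ (south pole), which in polar coordinates $(r,\theta)$ around $p$ reads $\dot\theta\equiv1$, $\dot r=h(r)$ (i.e.\ $X(x,y)=(-y,x)+\tfrac{h(r)}{r}(x,y)$ in a chart), where $h\in C^\infty$ vanishes to infinite order at $0$, has positive zeros exactly at $r_n=\tfrac1{n\pi}$, $n\ge N_0$, with a sign change at each of them (e.g.\ $h(r)=\rho(r)e^{-1/r^2}\sin(1/r)$ near $p$, $\rho$ a bump), and is strictly positive beyond the outermost $r_n$, smoothly glued to a profile making $q$ a hyperbolic source. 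Infinite flatness of $h$ makes $h(r)/r^j$ flat for all $j$, so $X$ extends smoothly across $p$; here $DX(p)$ is the rotation matrix, so $p$ is a linear centre, while nonlinearly the circles $\gamma_n=\{r=r_n\}$ are isolated (alternately attracting/repelling) periodic orbits separated by spiralling orbits, and $\dot r>0$ elsewhere rules out further periodic orbits; the index balance $\mathrm{ind}(p)+\mathrm{ind}(q)=1+1=\chi(\mathbb{S}^2)$ holds automatically. Put $\widehat\varphi=\phi_X$. Each $\gamma_n$ has $\widehat\varphi$-period $2\pi$, so $\sharp P_{2\pi}(\widehat\varphi)=\infty$ and $EP(\widehat\varphi)=\infty$.

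Next I would pick a $C^\infty$ function $f\colon\mathbb{S}^2\to[0,\infty)$ with $f>0$ off $p$, flat at $p$, and radial near $p$ with $f(r)\le 2\pi\,e^{-e^{1/r}}$ there (say $f(r)=e^{-e^{1/r}}$). Put $\varphi=\phi_{fX}$. Then $fX$ is $C^\infty$, and since $f$ is positive at every point except the fixed point $p$, the flows $\varphi$ and $\widehat\varphi$ have exactly the same orbits with the same orientation, hence are equivalent (with $\pi=\mathrm{id}$). Their common isolated periodic orbits are $q$ together with the $\gamma_n$ (the singularity $p$ is not isolated among periodic orbits, so it does not enter $P_t$). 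The $\varphi$-period of $\gamma_n$ equals $T_n=\int_0^{2\pi}f(\gamma_n(\theta))^{-1}\,d\theta\ge e^{e^{n\pi}}$, so for large $t$, $\sharp P_t(\varphi)\le 1+\tfrac1\pi\log\log t$; therefore $\tfrac1t\log\sharp P_t(\varphi)\to0$, i.e.\ $EP(\varphi)=0$.

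I expect the only genuinely delicate point to be Step 1: simultaneously securing (i) infinite-order flatness of the radial profile at $p$ so that $X$ is truly $C^\infty$ there, (ii) the zero set and sign pattern of $h$ near $p$ so that every $\gamma_n$ is an isolated limit cycle of period exactly $2\pi$, and (iii) a smooth transition outside the nest to a periodic-orbit-free profile ending at the source $q$, so that $X$ is a well-defined $C^\infty$ field on $\mathbb{S}^2$ with periodic set precisely $\{\gamma_n\}_{n\ge N_0}\cup\{p,q\}$. Granting this, the orbit equivalence $\varphi\sim\widehat\varphi$ and both asymptotic computations are immediate.
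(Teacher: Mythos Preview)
Your approach is sound and shares the paper's core idea: build a $C^\infty$ flow on $\mathbb{S}^2$ whose isolated periodic orbits form a nest of concentric circles accumulating on a degenerate fixed point, and realize the two equivalent flows as smooth positive time-changes of one another (so that the identity serves as the conjugating homeomorphism). One small slip: with $\dot r>0$ beyond the outermost circle the orbits spiral \emph{toward} $q$, so $q$ must be a sink rather than a source; the index balance $1+1=\chi(\mathbb{S}^2)$ is unaffected and nothing else in your argument changes.

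The one substantive difference lies in how $EP(\widehat\varphi)=\infty$ is obtained. You take $\widehat\varphi=\phi_X$ directly, so every $\gamma_n$ has the common period $2\pi$ and hence $\sharp P_{2\pi}(\widehat\varphi)=\infty$, forcing $EP=\infty$ in a degenerate way. The paper instead groups the circles into packets: around each radius $a_i=1/i$ it places $2^{2^i+1}+1$ nearby circles, and applies a first time-change $\beta_1$ so that the $i$-th packet acquires period $\approx 2\pi i^2$; thus $\sharp P_t(\widehat\varphi)<\infty$ for every finite $t$, yet the count grows super-exponentially in $t$. Your shortcut is legitimate for the theorem as stated and makes the construction markedly simpler; the paper's packet structure buys the sharper conclusion of genuine super-exponential growth with $\sharp P_t<\infty$ throughout, which is the phenomenon highlighted in the abstract. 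For the $EP=0$ side the two constructions are essentially the same: slow the flow near the degenerate singularity so drastically that the period of the $n$-th circle outruns any exponential, and the paper likewise embeds the disk picture into $\mathbb{S}^2$ (there via a reflection onto both hemispheres rather than a single sink at the antipode).
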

\begin{Rem} Recall that L. S. Young
\cite{LSYoung} has proven zero entropy  for all
 surface flows. However,  much different from entropy  Theorem
 \ref{Main theorem1}
 exhibits
 the existence  of two dimensional flows with super-exponential growth of periodic orbits.\end{Rem}
 \begin{Rem}From our construction the statement of Theorem \ref{Main theorem1} can hold for any manifold of dimension $\geq 2$ if a suitable embedding is taken.
 Here we only emphasis on the existence and thus omit details for general discussions.\end{Rem}

Next we return to the relationship of $EP$ and $h$ for equivalent
flows. As mentioned at beginning for almost ( open dense  ) systems
positive entropy enjoys the company of positive $EP$. In the next
theorem we are close to be
 tightrope walkers since our examples are excluded by those open
 dense sets.
 We start by supposing
that $f : M \rightarrow M$ is a $C^{\infty}$ diffeomorphism of a
smooth compact Riemannian manifold $M$ with $\dim M = m \geq 4$ with
the following properties: (1) $f$ has positive topological entropy
and (2) $f$ is minimal in the sense that all forward orbits are
dense in $M$ (or equivalently closed invariant sets are either empty
or the entire space). An example of such an $f$ was constructed by
Herman \cite{Herman}. Using the constant function $I : M \rightarrow
\mathbb{R},\, I(x) = 1,$ one gets a suspension manifold $\Omega$ and
a smooth vector field $X$ associated with this flow. Since $M$ is of
dimension $\geq4$, we know that $\dim \Omega\geq5$.

\begin{main}\label{Main theorem2} Given $K>0$, there exist two
$C^{2}$ equivalent flows $\psi$ and $\widehat{\psi}$ on $\Omega$
satisfying the following:\smallskip

$(1)$ $h(\psi)>K$ and $EP(\psi)=0$;\smallskip

$ (2)$  $h(\widehat{\psi})=0$ and $EP(\widehat{\psi})=\infty$.
\end{main}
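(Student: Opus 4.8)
The plan is to construct a single auxiliary $C^{2}$ flow $\phi_{0}$ on $\Omega$ and to obtain $\psi$ and $\widehat\psi$ as two time reparametrizations of $\phi_{0}$ that have the \emph{same} set of singularities, so that the orbit equivalence between them can be taken to be the identity. The building block is the suspension flow $\phi_{X}$: because $f$ is minimal, $\phi_{X}$ has no singularity and no periodic orbit at all, while $h(\phi_{X})=h(f)>0$. First I would produce $\phi_{0}$ from $\phi_{X}$ by two modifications, the ambient dimension $\ge 5$ leaving ample room for both. The first, in the spirit of the slow-down trick of Sun--Young--Zhou \cite{SunYoungZhou}, is to reparametrize $\phi_{X}$ by a smooth function that vanishes only \emph{mildly} (say linearly) at a single point $q_{0}$ with dense $\phi_{X}$-orbit; this creates a singularity at $q_{0}$, still produces no periodic orbit, and keeps a positive-entropy ergodic invariant measure $\mu_{*}$, with $c_{*}:=h_{\mu_{*}}(\phi_{0})>0$. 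The second is a local $C^{2}$ surgery inside a thin flow box of $\phi_{X}$, chosen disjoint from a neighbourhood of $q_{0}$, that inserts a singularity $p$ together with a sequence of \emph{isolated} periodic orbits $\gamma_{n}\to p$ with $\phi_{0}$-periods $\tau_{n}\in[\tau_{*},\tau^{*}]$ and $\mathrm{dist}(\gamma_{n},p)\asymp 1/n$. The surgery should be arranged so as not to trap the bulk of the flow: $\mu_{*}$ persists as a $\phi_{0}$-invariant measure whose support misses a ball $B_{p}\ni p$, and the $\gamma_{n}$ are the only periodic orbits of $\phi_{0}$.

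Next I would set $\psi=g_{1}\cdot\phi_{0}$ with $g_{1}\equiv\lambda$ off $B_{p}$ and $g_{1}(x)\le\exp(-1/\mathrm{dist}(x,p))$ on $B_{p}$. Since $\mathrm{supp}\,\mu_{*}\subset\Omega\setminus B_{p}$, Abramov's formula applied to the $\psi$-invariant measure $\nu_{*}$ associated with $\mu_{*}$ gives $h_{\nu_{*}}(\psi)=h_{\mu_{*}}(\phi_{0})/\int g_{1}^{-1}\,d\mu_{*}=\lambda c_{*}$, so $h(\psi)\ge\lambda c_{*}>K$ provided $\lambda>K/c_{*}$ (and $h(\psi)<\infty$ automatically, as $\phi_{0}$ is $C^{2}$ on a compact manifold). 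The periodic orbits of $\psi$ are still exactly the $\gamma_{n}$, now with $\psi$-period $p_{n}=\oint_{\gamma_{n}}g_{1}^{-1}\ge\tau_{*}\,e^{c n}$ for some $c>0$; hence $\sharp\{n:p_{n}\le t\}=O(\log t)$ and $EP(\psi)=0$.

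For $\widehat\psi$ I would take $\widehat\psi=g_{2}\cdot\phi_{0}$ with $g_{2}\equiv 1$ on a neighbourhood of $p$ (in particular $g_{2}\equiv 1$ on each $\gamma_{n}$) and $g_{2}(x)\le\exp(-1/\mathrm{dist}(x,q_{0}))$ near $q_{0}$. Then $\gamma_{n}$ has $\widehat\psi$-period $q_{n}=\oint_{\gamma_{n}}g_{2}^{-1}=\tau_{n}\le\tau^{*}$, so $\sharp\{n:q_{n}\le\tau^{*}\}=\infty$ and $EP(\widehat\psi)=\infty$. On the other hand, any $\phi_{0}$-invariant measure of positive entropy is supported on the part where $\phi_{0}$ equals the mildly slowed $\phi_{X}$; being of minimal type there, it charges every neighbourhood of $q_{0}$ and has, near $q_{0}$, the local product structure (base measure times Lebesgue in the flow direction) of a suspension. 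Choosing $g_{2}$ to vanish fast enough at $q_{0}$ then forces $\int g_{2}^{-1}\,d\mu=\infty$ for every such $\mu$, so the associated $\widehat\psi$-measure $\nu$ has $h_{\nu}(\widehat\psi)=0$; zero-entropy measures stay zero-entropy. Hence $h(\widehat\psi)=0$; alternatively one bounds $h(\widehat\psi(1,\cdot))$ directly as in \cite{SunYoungZhou}, using that a typical orbit spends almost all of its $\widehat\psi$-time near $q_{0}$. Finally, $\psi$ and $\widehat\psi$ are reparametrizations of $\phi_{0}$ by functions that are strictly positive off the \emph{common} singular set of $\phi_{0}$, so they have the same oriented orbit partition and $\pi=\mathrm{id}_{\Omega}$ is an orbit equivalence.

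The step I expect to be the real obstacle is the local surgery: realizing, inside a thin flow box and in a genuinely $C^{2}$ way, infinitely many isolated periodic orbits with prescribed periods $\tau_{n}$ and depths $\mathrm{dist}(\gamma_{n},p)\asymp 1/n$ accumulating on a singularity, while keeping the surgery non-trapping so that a positive-entropy ergodic measure survives with support away from $p$ --- the tension being that minimality of $f$ makes $\phi_{X}$-orbits dense and hence pushes the surgery to trap everything. Intertwined with this is the proof that $h(\widehat\psi)=0$: one must verify that the suspension-type local structure near $q_{0}$ is uniform enough across \emph{all} positive-entropy invariant measures for the Abramov integral to diverge, or equivalently that the direct cover estimate on $\widehat\psi(1,\cdot)$ survives the surgery. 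Granting these, the entropy and growth-rate computations and the equivalence go through as above.
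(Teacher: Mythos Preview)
Your overall architecture is the paper's: two singular regions, one (your $q_0$, the paper's $p_0$) controlling entropy via the Sun--Young--Zhou slow-down, the other (your $p$, the paper's $p_1$) hosting the periodic orbits, with the equivalence given by the identity. The entropy and $EP$ bookkeeping you outline is essentially what the paper carries out. The gap is exactly where you locate it, and it is a real one: a surgery ``inside a thin flow box'' cannot be merely non-trapping in the sense that \emph{most} orbits pass through. For $\mathrm{supp}\,\mu_*$ to miss $B_p$ (which you need so that $\int g_1^{-1}\,d\mu_*<\infty$ despite $g_1$ being flat at $p$), the ball $B_p$ must be genuinely $\phi_0$-invariant; and then \emph{every} $\phi_X$-orbit that previously crossed $B_p$ must be diverted around it while exiting the flow box at exactly the same point, so that the global return map --- and hence the entropy --- is preserved. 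You do not say how to do this, and it is the heart of the construction.

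The paper resolves it in two stages that you should look at. First it multiplies $X$ near $p_1$ by a function $\eta$ that vanishes not at a point but along an entire \emph{segment} $F_0$; the specific choice $\eta(x)=(\sum_{i\ge2}x_i^2)^{3/2}$ on $|x_1|\le 1$ exploits $\dim\Omega\ge5$ to keep $1/\eta$ locally integrable, so the Abramov computation (Proposition~\ref{positive entropy}) still yields positive entropy. Second it ``tears'' $F_0$ into a solid ball $\widetilde U_1$: an explicit family of curves $\rho_a,\sigma_b$, rotated by $O(m)$, produces a vector field vanishing identically on $\widetilde U_1$ and conjugate to the diverted flow outside; one then checks directly that the first-return map between two transverse sections $H_1,H_2$ is unchanged (Propositions~\ref{zero-positive 1}--\ref{zero-positive 2}). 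Only after this does the paper insert the two-dimensional flows $\phi_{Z_1},\phi_{Z_2}$ of Theorem~\ref{Main theorem1} into the now-invariant ball $\widetilde U_1$; it is their built-in periods, not a further reparametrization by your $g_1$, that separate $EP=0$ from $EP=\infty$. Note that this tearing is precisely what forces the regularity down to $C^2$: the rotated field involves $\sqrt{\sum_{i\ge2}x_i^2}$, and the authors themselves flag the $C^\infty$ version as open in their final remarks. Finally, for $h(\widehat\psi)=0$ the paper sidesteps your uniformity worry over all positive-entropy measures by showing directly, via a time-average argument (Proposition~\ref{zero entropy} and the end of Proposition~\ref{zero-positive 1}), that every $\widehat\psi$-invariant measure is supported on the singular set.
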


\section{Two dimensional equivalent flows:  proof of Theorem \ref{Main theorem1} }

Throughout this section, we use $D$ to denote the  two-dimensional
unit disk
$$\{x=(x_1,x_2)\in \mathbb{R}^2\mid x_1^2+x_2^2\leq1\}.$$
In order to obtain Theorem \ref{Main theorem1}, we proceed the main
stategery  of the proof as follows: firstly we arrange many enough
periodic orbits with the same period on $D$ and then we change their
periods on different orbits according to the applications of
different equivalent flows. Precisely, take a strictly decreasing
sequence $\{a_i\}$ with $a_i=\frac1i$. For $i\geq 2$, let
$$l_i=\min\{a_i-a_{i+1}, a_{i-1}-a_i\},$$
$$b_{i,j}=a_i+\frac{jl_i}{2^{2^{i+2}}},\,\,\mbox{where}\,\,-2^{2^{i}}\leq j\leq 2^{2^{i}}.$$
Define the strips centered at the circle $r=a_i$ by
$$L_i=\{x\mid\,a_i-\frac{l_i}{4}\leq x\leq a_i+\frac{l_i}{4}\}.$$
Denote $$I_0=0,\quad I_i=\sum_{j=1}^i 2^{2^{j}+1}+i,\quad i\geq 1.$$
Rearrange the sequence $\{b_{i,j}\}\cup\{1\}$ by decreasing order,
denoting
\begin{eqnarray*}
b_1=1,\quad b_i=b_{s,j} \quad \mbox{for}\quad i-1=I_{s-1}+j, \quad
|j|\leq 2^{2^{s}}.
\end{eqnarray*}

For the purpose to  get periodic orbits supporting on $r=b_{i}$, we
give a $C^{\infty}$ function $\alpha_0$ on $[0,1]$:
\begin{equation*}
\alpha_0(x)=\begin{cases}e^{\frac{1}{(x-b_i^2)(x-b_{i+1}^2)}}
\,\,&\mbox{for}\,\,b_i^2<x<b_{i+1}^2,\,i\geq 1,\\
0&\mbox{for}\,\,\,x=0\,\,\mbox{or}\,\,b_{i}^2,\,i\geq 1.\end{cases}
\end{equation*}
Consider a standard differential equation
\begin{equation*}
\begin{cases}
\frac{dx}{dt}&=-y+\alpha_0(x^2+y^2)x,\\
\frac{dy}{dt}&=x+\alpha_0(x^2+y^2)y.
\end{cases}
\end{equation*}
Let $x=r\cos\theta,y=r\sin\theta$, then
\begin{equation*}
\begin{cases}
r\frac{dr}{dt}&=\alpha_0(r^2)r^4,\\
\frac{d\theta}{dt}&=1.
\end{cases}
\end{equation*}

For the sake of writing,  denote the vector field
$Z_0(x,y)=(-y+\alpha_0(x^2+y^2)x,\,x+\alpha_0(x^2+y^2)y)$. We can
see that $\phi_{Z_0}$ has periodic orbits $r=b_i$ with the period
$2\pi$. To get different exponential growth rate of periodic orbits,
next we will change the period for different $i$.

{\it (1) Constriction of a flow on $D$ with $EP=\infty$.}

Take a $C^{\infty}$ function $\beta_1: \mathbb{R}\rightarrow
\mathbb{R}$ such that
$$\beta_1(x)=a_i^2\quad \mbox{for}\quad x\in L_i.$$
Consider the vector field $Z_1=\beta_1(r^2)Z_0$, then the number of
$2\pi n^2$-periodic orbits is $2^{2^{n}+1}+1$. Thus,
$$EP(\phi_{Z_1})
=\limsup_{t\rightarrow +\infty} \frac{1}{t}\log \,\sharp\,
P_{t}(\phi_{Z_1})\geq\limsup_{n\rightarrow +\infty} \frac{1}{2\pi
n}\log \,(2^{2^{n}+1}+1)=\infty.$$

{\it (2) Construction of a flow on $D$ with $EP=0$.}

Take a $C^{\infty}$ function $\beta_2: \mathbb{R}\rightarrow
\mathbb{R}$ such that
$$\beta_2(x)=2^{-2^i},\,\,\,\mbox{for}\,\,x\in L_i.$$
Consider the vector field $Z_2=\beta_2(r^2)Z_0$, then the number of
$2\pi 2^{2^i}$-periodic orbits is $2^{2^{i}+1}+1$. Thus,
\begin{eqnarray*}EP(\phi_{Z_2}) &=&\limsup_{t\rightarrow +\infty} \frac{1}{t}\log
\,\sharp\, P_{t}(\phi_{Z_2})\\&=&\limsup_{n\rightarrow +\infty}
\frac{1}{2\pi 2^{2^n}}\log
\,(\sum_{i=1}^n2^{2^{i}+1}+1)\\
&\leq& \limsup_{n\rightarrow +\infty} \frac{1}{2\pi 2^{2^n}}\log
\,n(2^{2^{n}+1}+1)\\
&=&0.\end{eqnarray*}Finally  one can see  that the two smooth flows
$\phi_{Z_1}$ and $\phi_{Z_2}$  from our constructions are in fact
equivalent.

\noindent {\bf Proof \,of\,  Theorem \ref{Main theorem1}\,\,\,\,\,}
Let $\mathbb{S}^{2+}=\{(x_1,x_2,x_3)\in \mathbb{S}^2 \mid x_3\geq0
\}$, $\mathbb{S}^{2-}=\{(x_1,x_2,x_3)\in \mathbb{S}^2 \mid x_3\leq0
\}$. Define projection $\varrho(x_1,x_2,x_3)=(x_1,x_2)$ and
$\varrho_+=\varrho\mid _{\mathbb{S}^{2+}}$, $\varrho_-=\varrho\mid
_{\mathbb{S}^{2-}}$. Next we  embed the flows $\phi_{Z_i}$ of $D$
into $\mathbb{S}^2$ by the double cover $\varrho$.

\begin{figure}[h]\label{iterate}
\begin{center}
\begin{picture}(180,120)(-30,-12)

\put(-50,0){\begin{picture}(100,100)

\put(-10,20){\circle{20}}
 \put(-10,20){\circle{40}}
 \put(-10,20){\circle{60}}

\put(-10,10){\vector(1,0){}}
  \put(-10,30){\vector(-1,0){}}

  \put(-10,0){\vector(1,0){}}
  \put(-10,40){\vector(-1,0){}}

  \put(-10,-10){\vector(1,0){}}
  \put(-10,50){\vector(-1,0){}}

\put(-10,20){\circle*{2}}

\put(-10,-20){\makebox(0,0){$D$}}

\end{picture}
}

\put(120,0){\begin{picture}(100,100) \put(-10,20){\circle{60}}
\qbezier[50](-40,20)(-40,30)(-10,30)
\qbezier(-40,20)(-40,10)(-10,10) \qbezier[50](-10,30)(20,30)(20,20)
\qbezier(20,20)(20,10)(-10,10)

\qbezier[50](-31.2,41.2)(-31.2,45)(-10,45)
\qbezier[50](-10,45)(11.2,45)(11.2,41.2)
\qbezier(11.2,41.2)(11.2,37.4)(-10,37.4)
\qbezier(-10,37.4)(-31.2,37.4)(-31.2,41.2)

\qbezier(-10,-5)(-31.2,-5)(-31.2,-1.2)
\qbezier[50](-31.2,-1.2)(-31.2,2.6)(-10,2.6)
\qbezier[50](-10,2.6)(11.2,2.6)(11.2,-1.2)
\qbezier(-10,-5)(11.2,-5)(11.2,-1.2)

\put(-11,45){\vector(-1,0){}}

\put(-11,30){\vector(-1,0){}}
 \put(-9,10){\vector(1,0){}}

 \put(-9,37.4){\vector(1,0){}}

 \put(-11,2.6){\vector(-1,0){}}

\put(-9,-5){\vector(1,0){}}

\put(-10,-20){\makebox(0,0){$\mathbb{S}^2$}}

\end{picture}
}

\end{picture}
\end{center}
\caption{\,Flows on $D$ and $\mathbb{S}^2$}
\end{figure}
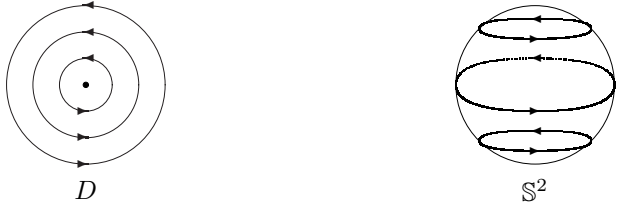


 Precisely, let \begin{equation*}\varphi=\begin{cases}
 \varrho_-^{-1}\circ \phi_{Z_2}\circ \varrho_-\,\,\, &\mbox{for}\,\,x\in \mathbb{S}^{2-},\\
\varrho_+^{-1}\circ \phi_{Z_2}\circ \varrho_+\,\,\,
&\mbox{for}\,\,x\in \mathbb{S}^{2+}.
 \end{cases}\end{equation*}
\begin{equation*}\widehat{\varphi}=\begin{cases}
 \varrho_-^{-1}\circ \phi_{Z_1}\circ \varrho_-\,\,\, &\mbox{for}\,\,x\in \mathbb{S}^{2-},\\
\varrho_+^{-1}\circ \phi_{Z_1}\circ \varrho_+\,\,\,
&\mbox{for}\,\,x\in \mathbb{S}^{2+}.
 \end{cases}\end{equation*}

Then $\varphi$ and $\widehat{\varphi}$ are equivalent flows on
$\mathbb{S}^2$ with the desired property:
$$EP(\varphi)=0,\,\,\,EP(\widehat{\varphi})=\infty.$$
\hfill$\Box$

\section{High dimensional equivalent flows: proof of Theorem \ref{Main theorem2} }

\subsection{Basic notions and technique lemmas on suspension flows }
$ $\\

Before the construction, we need do some preliminaries.

Let $(M, \mathcal{B}(M),\mu)$ be a probability space and  $f$ be a
$\mu$-measurable  map.

Consider the space $\Omega = M \times [0, 1]/ \sim$, where $\sim$ is
the identification of $(y, 1)$ with $(f(y), 0)$. The standard
suspension of $f$ is the flow $\phi$ on $\Omega$ defined by $\phi
(y, s) = (y, t + s)$, for $0 \leq t +s < 1$. A standard argument as
in \cite{STLiao} shows that $ \Omega$ is a $C^{\infty}$ smooth
compact Riemannian manifold  and $\phi$ is $C^{\infty}$ provided
$f:M\rightarrow M$ is a $C^{\infty}$ diffeomorphism on $C^{\infty}$
smooth manifold $M$.  If $f : M \rightarrow M$ is minimal as a
homeomorphism, then $\phi$ is a minimal flow.
\begin{Prop}[Proposition 2.15 of \cite{SunYoungZhou}]\label{enttropy of suspend}
Let $\mu$ be an invariant ergodic measure of $f$ on $M$. We define
$$\int_{\Omega}g d\overline{\mu}:=\int_{E}\int_0^{1}g(x,t) dtd\mu, \,\,\forall g\in C^0(\Omega).$$
Then we have
$$h_{\overline{\mu}}(\psi)=h_{\mu}(f).$$
\end{Prop}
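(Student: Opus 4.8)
The plan is to pass to the time-one map and then recognise the resulting system as a direct product. Recall that, just as for topological entropy, the metric entropy of a measure-preserving flow equals that of its time-one map, so $h_{\overline\mu}(\psi)=h_{\overline\mu}(\psi_1)$ with $\psi_1=\psi(\cdot,1)$; this uses that $\overline\mu$ is a $\psi$-invariant probability measure, which in turn follows from Fubini together with the $f$-invariance of $\mu$. Thus it suffices to compute $h_{\overline\mu}(\psi_1)$.

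The key observation is that, because the roof function is the constant $1$, every point of $\Omega=M\times[0,1]/\!\sim$ has a unique representative $(y,s)$ with $s\in[0,1)$, and flowing for unit time carries $(y,s)$ through $(y,1)=(f(y),0)$ to $(f(y),s)$. Hence under the obvious measurable isomorphism $\Omega\cong M\times[0,1)$ the time-one map $\psi_1$ becomes $f\times\id_{[0,1)}$ and $\overline\mu$ becomes the product $\mu\times\Leb$. It then remains only to invoke the product formula for metric entropy (see \cite{Walters}), $h_{\mu\times\Leb}(f\times\id)=h_\mu(f)+h_{\Leb}(\id_{[0,1)})$, together with the fact that the identity map has zero entropy (since $\bigvee_{i=0}^{n-1}\id^{-i}\mathcal I=\mathcal I$ for any partition $\mathcal I$), to obtain $h_{\overline\mu}(\psi)=h_{\overline\mu}(\psi_1)=h_\mu(f)$.

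If one prefers not to quote the product formula, the same conclusion comes from a direct two-sided estimate. For a finite partition $\mathcal P$ of $M$, its lift $\overline{\mathcal P}=\{P\times[0,1):P\in\mathcal P\}$ satisfies $\bigvee_{i=0}^{n-1}\psi_1^{-i}\overline{\mathcal P}=\big(\bigvee_{i=0}^{n-1}f^{-i}\mathcal P\big)\times[0,1)$, whence $h_{\overline\mu}(\psi_1,\overline{\mathcal P})=h_\mu(f,\mathcal P)$ and so $h_{\overline\mu}(\psi_1)\geq h_\mu(f)$. For the reverse inequality one restricts the supremum defining $h_{\overline\mu}(\psi_1)$ to product partitions $\mathcal P\times\mathcal I$, where $\mathcal I$ is a finite partition of $[0,1)$ into intervals; these generate the Borel $\sigma$-algebra of $\Omega$, so by the standard inequality $h(T,\mathcal Q)\leq h(T,\mathcal R)+H(\mathcal Q\mid\mathcal R)$ they suffice, and since $\psi_1$ leaves the second coordinate fixed one gets $h_{\overline\mu}(\psi_1,\mathcal P\times\mathcal I)=h_\mu(f,\mathcal P)\leq h_\mu(f)$. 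The only step requiring genuine care — and the one I expect to absorb most of the otherwise routine work — is this last approximation, i.e. checking that no partition of $\Omega$ does better than the lifted ones; everything else is immediate once the roof function is taken to be constant.
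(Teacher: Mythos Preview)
The paper does not actually prove this proposition; it is quoted as Proposition~2.15 of \cite{SunYoungZhou} and no argument is given here, so there is nothing in the present paper to compare your proof against. Your approach is correct and is the standard one: with constant roof function equal to $1$ the suspension is measurably a product $M\times[0,1)$, the time-one map becomes $f\times\id$, and the result follows from the entropy product formula (or, if one prefers, it is the special case $\rho\equiv 1$ of Abramov's formula $h_{\overline\mu}(\psi)=h_\mu(f)\big/\!\int\rho\,d\mu$, which the paper itself invokes later as \cite{Abramov}). The step you flag as delicate---that arbitrary partitions of $\Omega$ cannot beat product partitions---is handled by the Kolmogorov--Sinai generator theorem: product partitions $\mathcal P\times\mathcal I$ with $\mathcal P$ and $\mathcal I$ running through refining generating sequences for $M$ and $[0,1)$ generate the Borel $\sigma$-algebra of $\Omega$, so the supremum defining $h_{\overline\mu}(\psi_1)$ is attained along them.
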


\begin{Def}
Suppose $\phi$ is a measurable flow on a Borel probability space
$(M, \mathcal{B}(M), \nu)$ and $\Omega$ is divided into disjoint
invariant measurable sets $F_1$ and $F_2$ such that $\mu(F_1) = 1$
and $\mu(F_2)=0$. Further suppose that $\theta(t, x)$ is a real
measurable function defined on
$(-\infty,+\infty)\times(\Omega\setminus F_2) = \mathbb{R}\times
F_1$ with the following properties for every fixed
$x\in F_1:$\\

(1) $\theta(t, x)$ is continuous and non-decreasing in $t$;

(2) $\theta(t + s, x) = \theta(s, x) + \theta(t, \phi_s(x))$ for all
$t$ and $s$;

(3) $\theta(0, x) = 0$, $\lim_{t\rightarrow+\infty}\theta(t, x)
=\infty$, $\lim_{t\rightarrow-\infty}\theta(t, x) =\infty$.\\

\noindent {Then} $\theta$ is called an additive function of $\Omega$
with carrier $F_2$. An additive function is said to be integrable if
it is integrable in $\Omega$ for every fixed $t$.
\end{Def}

For  a non-negative, integrable function $a(x)$, we define $$
E_{\mu}(a) = \int_{E}a(x) d\mu(x) .$$

\begin{Lem}\label{subadditive}If $\phi$ is a measurable flow on a Borel probability space
$(\Omega, \mathcal{B}(\Omega), \nu)$ and $a(x)$ is a non-negative,
integrable function satisfying  $$E_{\mu}(a) = \int_{\widetilde{E}}
a(x) d\mu(x) > 0,$$ then the function
$$\theta(t, x) = \int_{0}^t a(\phi_s(x)) ds$$ is an integrable additive function.
\end{Lem}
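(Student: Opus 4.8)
The plan is to produce a flow-invariant decomposition $\Omega = F_1 \sqcup F_2$ with $\nu(F_2)=0$ on which $\theta(t,x)=\int_0^t a(\phi_s(x))\,ds$ satisfies the three defining properties, and to check integrability directly. For the latter: since $a\geq 0$ and $\phi$ preserves $\nu$, Tonelli's theorem gives, for each fixed $t$,
$$\int_\Omega |\theta(t,x)|\,d\nu(x)=\int_0^{|t|}\Big(\int_\Omega a(\phi_s(x))\,d\nu(x)\Big)\,ds=|t|\,E_\mu(a)<\infty,$$
so $\theta(t,\cdot)\in L^1(\nu)$ for every $t$, which is exactly what is meant by an \emph{integrable} additive function. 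Applying the same computation along $t=\pm n$, $n\in\NN$, and using monotonicity in $t$ (valid because $a\geq0$), one sees that for $\nu$-a.e.\ $x$ the map $s\mapsto a(\phi_s(x))$ is integrable on every bounded interval; let $G$ denote this set of full measure, which is plainly orbit-invariant.

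On $G$ the first two properties are routine. Property (1) holds because $\theta(\cdot,x)$ is the indefinite integral of a locally integrable function, hence absolutely continuous, and it is non-decreasing since $a\geq0$. Property (2) follows from the flow identity $\phi_{s+v}=\phi_v\circ\phi_s$ and the change of variable $u=s+v$:
$$\theta(t+s,x)=\int_0^{s}a(\phi_u(x))\,du+\int_{s}^{t+s}a(\phi_u(x))\,du=\theta(s,x)+\int_0^{t}a(\phi_v(\phi_s(x)))\,dv=\theta(s,x)+\theta(t,\phi_s(x)),$$
every term being meaningful because $\phi_s(x)\in G$. Finally $\theta(0,x)=0$ is immediate, and setting $s=-t$ in the identity just displayed, together with $\theta(r,\cdot)\geq0$ for $r\geq0$, shows $\theta(t,x)\leq0$ for $t\leq0$ (which also justifies the absolute value used above).

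The one substantive point is the divergence in (3), i.e.\ $\int_0^{\infty}a(\phi_s(x))\,ds=\int_{-\infty}^{0}a(\phi_s(x))\,ds=\infty$ for $\nu$-a.e.\ $x$. I would obtain this from the Birkhoff ergodic theorem for the measure-preserving flow $\phi$ and for its time reversal: since $E_\mu(a)>0$, for $\nu$-a.e.\ $x$ both $\frac{1}{T}\int_0^{T}a(\phi_s(x))\,ds$ and $\frac{1}{T}\int_{-T}^{0}a(\phi_s(x))\,ds$ converge to a non-negative limit $\bar a(x)$ with $\int_\Omega\bar a\,d\nu=E_\mu(a)>0$. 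In the situation of Theorem~\ref{Main theorem2} the measure $\nu$ is ergodic, being the suspension of an ergodic $f$-invariant measure $\mu$ (cf.\ Proposition~\ref{enttropy of suspend}), so $\bar a\equiv E_\mu(a)>0$, and hence $\int_0^{T}a(\phi_s(x))\,ds\geq\frac{1}{2}E_\mu(a)\,T$ and $\int_{-T}^{0}a(\phi_s(x))\,ds\geq\frac{1}{2}E_\mu(a)\,T$ for all large $T$, forcing both tail integrals to be infinite; this is precisely the divergence asserted in (3). (Without ergodicity the same conclusion holds on the invariant set $\{\bar a>0\}$, together with the elementary remark that $\theta$ cannot diverge along an orbit on which it stays bounded, but ergodicity is all that is needed here.) Taking $F_1$ to be the intersection of $G$ with this divergence set — an orbit-invariant set with $\nu(F_1)=1$ — and $F_2=\Omega\setminus F_1$, we conclude that $\theta$ is an integrable additive function of $\Omega$ with carrier $F_2$. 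As the outline indicates, the main obstacle is step (3); the rest is Tonelli's theorem and the cocycle identity.
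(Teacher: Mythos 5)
Your argument is correct, but be aware that the paper does not actually prove this lemma: it simply cites Theorem 3.1 of Totoki's paper on time changes of flows, so your self-contained derivation is supplying what the paper omits rather than replicating it. The two non-trivial ingredients you isolate are the right ones. Integrability of $\theta(t,\cdot)$ is exactly Tonelli plus invariance of $\nu$ under $\phi_s$, and the cocycle identity (2) together with the continuity and monotonicity in (1) are, as you say, routine once one restricts to the full-measure, orbit-invariant set $G$ of points whose orbits have locally integrable $a$. The substantive point is the divergence in (3), and your treatment via the Birkhoff theorem for the flow and for its time reversal is the standard one; you are also right to flag that this step genuinely needs more than the stated hypotheses. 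For a non-ergodic $\nu$ the lemma as literally written is false: if $A=\{\bar a=0\}$ has positive measure, then since $A$ is invariant and $\int_A \bar a\,d\nu=\int_A a\,d\nu$ with $a\geq 0$, one gets $a=0$ a.e.\ on $A$, hence $\theta(\cdot,x)\equiv 0$ for a.e.\ $x\in A$, and no carrier of measure zero can absorb this failure of (3). Ergodicity of $\nu$ --- which holds in the paper's application, where $\nu$ is the suspension $\overline{\mu}$ of the ergodic measure $\mu$, cf.\ Lemma \ref{suspend of measures} --- is thus an implicit hypothesis (it is present in Totoki's setting), and your proof makes this visible rather than hiding it. One cosmetic point: the paper's Definition requires $\lim_{t\to-\infty}\theta(t,x)=\infty$, which is incompatible with $\theta$ being non-decreasing and vanishing at $t=0$; it is evidently a typo for $-\infty$ (equivalently $|\theta(t,x)|\to\infty$), and that is exactly what your backward Birkhoff estimate delivers.
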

For a proof see Theorem 3.1 in \cite{Totoki}.

 \begin{Def} The
function $\theta(t, x)$ in Lemma \ref{subadditive} is called the
additive function defined by $a(x)$.
\end{Def}

\begin{Lem}\label{suspend of measures}
Let $\mu$  be an invariant probability measure of $f$ on $E$. Assume
$\theta(t, x)$ is the additive function defined by $a(x)$ with
$0<E_{\mu}(a)<\infty$. We define
$$\int_{\Omega}g d\widehat{\mu}:=\frac{1}{E_{\mu}(a)}\int_{E}\int_0^{\theta(x,t)}g(x,t) dtd\mu, \,\,\forall g\in C^0(\Omega).$$
Then $\widehat{\mu}$ is an invariant measure of $\phi_t$ on
$\Omega$. Further, $\widehat{\mu}$ is ergodic if $\mu$ is ergodic.
\end{Lem}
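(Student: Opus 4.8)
The plan is to recognize $\widehat{\mu}$ as the invariant measure of a suspension (a ``flow under a function'') built over $(f,\mu)$, with the roof function $x\mapsto\theta(1,x)$ replacing the constant roof $1$ of Proposition \ref{enttropy of suspend}, and then to check in turn that $\widehat{\mu}$ is a Borel probability measure, that it is invariant under the flow $\widehat{\phi}$ obtained from $\phi$ by the time change $\theta$, and that ergodicity of $\mu$ forces ergodicity of $\widehat{\mu}$. For well-definedness: the assignment $g\mapsto\frac{1}{E_{\mu}(a)}\int_{E}\int_{0}^{\theta(1,x)}g(x,t)\,dt\,d\mu(x)$ is a positive linear functional on $C^{0}(\Omega)$, hence by the Riesz representation theorem it is integration against a Borel measure $\widehat{\mu}$; taking $g\equiv 1$ and using Lemma \ref{subadditive} together with Fubini and the invariance of $\mu$ under the base map gives $\int_{E}\theta(1,x)\,d\mu(x)=E_{\mu}(a)$, so $\widehat{\mu}(\Omega)=1$. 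The hypothesis $0<E_{\mu}(a)<\infty$ is exactly what legitimizes the normalization and guarantees $\theta(1,x)<\infty$ for $\mu$-a.e.\ $x$.

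For invariance I would use the cocycle identity $\theta(t+s,x)=\theta(s,x)+\theta(t,\phi_{s}(x))$ in the definition of an additive function: it says precisely that inverting $t\mapsto\theta(t,x)$ produces a genuine flow $\widehat{\phi}_{\tau}$. In the fibered coordinates $(x,t)$ with $0\le t<\theta(1,x)$, the flow $\widehat{\phi}_{\tau}$ advances the second coordinate at unit speed and, upon reaching $\theta(1,x)$, passes to $(f(x),0)$. Thus $\widehat{\phi}_{\tau}$-invariance of $\widehat{\mu}$ reduces to two elementary facts: translation invariance of Lebesgue measure $dt$ along each fiber, and the identity $\int_{E}h(f(x))\,d\mu=\int_{E}h(x)\,d\mu$ controlling the return identification. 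Concretely I would fix $g\in C^{0}(\Omega)$ and small $\tau>0$, split $\int g\circ\widehat{\phi}_{\tau}\,d\widehat{\mu}$ along each fiber into the part staying in the same fiber and the part crossing into $(f(x),0)$, recombine using $f_{*}\mu=\mu$, and then pass to arbitrary $\tau$ by the flow property and a density argument; alternatively, this is the standard construction of the invariant measure of a suspension flow with an $L^{1}$ roof, and one may cite \cite{Totoki} as was done for Lemma \ref{subadditive}.

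For ergodicity, assume $\mu$ is ergodic and let $B\subseteq\Omega$ be $\widehat{\phi}$-invariant with $\widehat{\mu}(B)>0$. Since $B$ is flow-saturated, Fubini associates to it a base set $B_{0}\subseteq E$ with $f^{-1}B_{0}=B_{0}$ modulo $\mu$-null sets and $\widehat{\mu}(B)=\frac{1}{E_{\mu}(a)}\int_{B_{0}}\theta(1,x)\,d\mu(x)$; ergodicity of $\mu$ forces $\mu(B_{0})\in\{0,1\}$, and since $\theta(1,x)>0$ for $\mu$-a.e.\ $x$, $\widehat{\mu}(B)>0$ yields $\mu(B_{0})=1$, hence $\widehat{\mu}(B)=1$. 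The main obstacle I anticipate is not conceptual but bookkeeping: making the fibered coordinates and the crossing argument precise when the roof $\theta(1,\cdot)$ is merely integrable, so that it may be unbounded or vanish on a set of positive measure, while checking that every identification and every step of the invariance computation remains valid $\mu$-almost everywhere. Everything else is the standard suspension-flow package already packaged in the cited work of Totoki.
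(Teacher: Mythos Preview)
The paper does not actually prove this lemma: immediately after the statement it writes ``The proof is elementary and omitted.'' Your proposal correctly supplies the standard suspension/time-change argument (Riesz for well-definedness, the cocycle identity plus $f_{*}\mu=\mu$ for invariance, and the base-set reduction for ergodicity), which is exactly the elementary computation the authors had in mind and which is packaged in the Totoki reference they cite for Lemma~\ref{subadditive}.
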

The proof is elementary and omitted.

\begin{Lem}[Lemma 2.4 of \cite{SunYoungZhou}]\label{large measure}Suppose $(M,f)$ is a minimal
homeomorphism. Then for any $\varepsilon>0$, there exists
$L(\varepsilon)>0$ such that for any ergodic invariant measure
$\mu$, we have
$$\mu(B_{M}(x,\varepsilon))\geq \frac{1}{L(\varepsilon)}>0,\,\forall x\in M.$$

\end{Lem}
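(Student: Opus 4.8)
The plan is to exploit minimality of $(M,f)$ together with compactness to produce a single constant $L(\varepsilon)$ that works uniformly over all ergodic invariant measures and all base points. First I would fix $\varepsilon>0$ and, for each $x\in M$, use minimality to assert that the forward orbit of every point visits $B_M(x,\varepsilon/2)$; more precisely, for each $y\in M$ there is an integer $n=n(y,x)\ge 0$ with $f^n(y)\in B_M(x,\varepsilon/2)$. By continuity of $f$ this $n$ works on a whole neighbourhood of $y$, so by compactness of $M$ there is a finite bound $N=N(x,\varepsilon)$ such that every point of $M$ enters $B_M(x,\varepsilon/2)$ within the first $N$ iterates. A further compactness argument in the $x$-variable (covering $M$ by finitely many balls $B_M(x_k,\varepsilon/2)$ and taking the maximum of the corresponding $N(x_k,\varepsilon/2)$) yields a single integer $N=N(\varepsilon)$, independent of $x$, such that $\bigcup_{j=0}^{N} f^{-j}\big(B_M(x,\varepsilon)\big) = M$ for every $x\in M$.

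Next I would translate this covering statement into a measure estimate using invariance. For any $f$-invariant probability measure $\mu$ (ergodicity is not actually needed here) we have
$$
1 = \mu(M) \le \sum_{j=0}^{N} \mu\!\left(f^{-j}\big(B_M(x,\varepsilon)\big)\right) = (N+1)\,\mu\big(B_M(x,\varepsilon)\big),
$$
since $\mu(f^{-j}A)=\mu(A)$. Hence $\mu(B_M(x,\varepsilon)) \ge \frac{1}{N+1}$ for all $x\in M$, and setting $L(\varepsilon) = N(\varepsilon)+1$ gives exactly the claimed bound $\mu(B_M(x,\varepsilon)) \ge \frac{1}{L(\varepsilon)} > 0$, uniformly in $x$ and in $\mu$.

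The only delicate point is making the "entering time" bound $N$ genuinely uniform in the center $x$ as well as in the starting point $y$; this is handled by the double compactness argument above (first over $y$, then over $x$, using continuity of the finitely many maps $f^j$). Everything else is a one-line application of invariance, so I expect no real obstacle — this is why the paper attributes it to \cite{SunYoungZhou} and presumably relegates the proof there. If one wanted ergodicity to play a role, one could instead invoke the ergodic theorem to say that $\mu$-a.e. point spends a definite fraction of time in $B_M(x,\varepsilon)$, but the covering/invariance argument is cleaner and gives the stated uniform constant directly.
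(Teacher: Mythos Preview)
Your proof is correct and is in fact the standard argument for this well-known fact about minimal homeomorphisms. The paper itself does not supply a proof of this lemma but simply attributes it to \cite{SunYoungZhou}; your covering-plus-invariance argument is exactly the classical one, and your observation that ergodicity is unnecessary (only $f$-invariance of $\mu$ is used) is also correct.
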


\begin{Lem}[Corollary 2.12 of \cite{SunYoungZhou}]\label{atomic measures}Assume that $\phi_t$ is the standard suspension of a minimal homeomorphism $(M,f)$ from above, $X$ is
the vector field that induces $\phi_t$ and $\alpha\in C^1(M, [0,
1])$. Denote by $\phi_{\alpha X}$  the flow induced by the vector
field $\alpha X$ on $\Omega$. For any $x\in M$, define $\gamma(x)$
by: \begin{equation*} \begin{cases}\phi_{\alpha X}((x,0), \gamma(x))
= \phi_{\alpha X}((x, 0),1) = (f(x), 0), \,\,\,&(x, 0) \neq
f^{-1}(p) \,\,\mbox{and}\,\, (x, 0) \neq p\, ;\\
\gamma(x) = +\infty,\,\, & (x, 0) = f^{-1}(p)\,\, \mbox{or}\,\, (x,
0) = p\,.\end{cases}\end{equation*} If $E_{\mu}(\gamma)=+\infty$ for
 any non-atomic ergodic measure $\mu$ of $f$,  then $\phi_{\alpha X}$ has only atomic invariant Borel
probability measures.
\end{Lem}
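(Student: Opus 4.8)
The plan is to argue by contradiction via the ergodic decomposition. Since every invariant Borel probability measure of a flow is an integral of ergodic ones, it suffices to rule out \emph{non-atomic} ergodic $\phi_{\alpha X}$-invariant probability measures: if there are none, the ergodic components of any invariant measure are atomic (in fact Dirac masses at fixed points of $\phi_{\alpha X}$), so the measure itself is atomic. Assume then that $\widehat\nu$ is a non-atomic ergodic $\phi_{\alpha X}$-invariant probability measure. Because $p$ is a fixed point of $\phi_{\alpha X}$, ergodicity gives $\widehat\nu(\{p\})\in\{0,1\}$, and non-atomicity forces $\widehat\nu(\{p\})=0$.

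The core of the argument is to recover from $\widehat\nu$ an $f$-invariant measure on $M$ and feed it to the hypothesis. The orbits of $\phi_{\alpha X}$ coincide, as subsets of $\Omega$, with the orbits of the standard suspension $\phi_X$, for which $M\times\{0\}$ is a global cross-section with return map $f$. Let $\Sigma=(M\times\{0\})\setminus\{p,f^{-1}(p)\}$. First I would show that $\widehat\nu$-almost every point of $\Omega$ lies on an orbit crossing $\Sigma$ infinitely often in both time directions: an orbit not meeting $\Sigma$ in forward time must run into $p$, but a $C^1$ flow cannot reach a fixed point in finite time, so such an orbit is a non-recurrent strand asymptotic to $p$, hence $\widehat\nu$-null by Poincar\'e recurrence (and symmetrically in backward time); moreover the successive crossing times diverge, since $\sum_{n\ge0}\gamma(f^nx)$ is the forward time the orbit of $(x,0)$ needs to escape to $p$, which is $+\infty$. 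Thus $\Sigma$ is, up to a $\widehat\nu$-null set, a genuine Poincar\'e section, and the induced transverse measure $\mu$ is an $f$-invariant ergodic probability measure on $M$ whose first-return time under $\phi_{\alpha X}$ is precisely the function $\gamma$ of the statement.

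It remains to extract $E_\mu(\gamma)<\infty$ and a contradiction. By the standard correspondence between flow-invariant measures and their transverse measures on a cross-section (a Kac-type formula; this is the framework underlying Lemmas \ref{subadditive} and \ref{suspend of measures}), $\widehat\nu$ is, after normalization by $E_\mu(\gamma)$, the suspension of $\mu$ under the roof $\gamma$; since $\widehat\nu$ is a probability measure, necessarily $E_\mu(\gamma)<\infty$. On the other hand $\mu$ is non-atomic: a minimal homeomorphism of an infinite compact space has no periodic points and hence carries no atomic invariant measure (the case of finite $M$ being trivial and irrelevant here, where $f$ has positive entropy); alternatively this follows directly from Lemma \ref{large measure}. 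So $\mu$ is a non-atomic ergodic invariant measure of $f$ with $E_\mu(\gamma)<\infty$, contradicting the hypothesis that $E_\mu(\gamma)=+\infty$ for every such $\mu$. Therefore no non-atomic ergodic $\widehat\nu$ exists, and every invariant Borel probability measure of $\phi_{\alpha X}$ is atomic.

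I expect the main obstacle to be the second step: making the cross-section/Kac correspondence rigorous despite the singularity, i.e.\ checking that the orbits failing to return to $\Sigma$ form a $\widehat\nu$-null set and that the induced transverse measure is finite exactly when $E_\mu(\gamma)<\infty$. The remaining ingredients — ergodic decomposition, the impossibility of reaching a fixed point of a $C^1$ flow in finite time, and minimality ruling out atoms for $f$ — are routine.
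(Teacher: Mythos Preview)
The paper does not supply its own proof of this lemma; it is quoted as Corollary~2.12 of \cite{SunYoungZhou} and used as a black box. Your argument is correct and is the natural one (and presumably close to the one in the cited reference): reduce via ergodic decomposition to a non-atomic ergodic $\widehat\nu$, use $M\times\{0\}$ minus the two exceptional base points as a Poincar\'e section for $\phi_{\alpha X}$, recover an $f$-invariant ergodic measure $\mu$ on $M$ by the Ambrose--Kakutani/Kac correspondence, and read off $E_\mu(\gamma)<\infty$ from the fact that $\widehat\nu$ is a probability. One small point worth making explicit to close the gap you flag: the transverse measure is automatically \emph{finite}, because $\alpha\le 1$ forces $\gamma\ge 1$, so $\mu(M)\le\int_M\gamma\,d\mu=\widehat\nu(\Omega)=1$; this is what lets you normalize $\mu$ to a probability before invoking the hypothesis. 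The non-atomicity of $\mu$ then follows, as you note, from minimality of $f$ on an infinite compact space.
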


$$$$
\subsection{Proof of Theorem \ref{Main theorem2}}~\newline\\
 {\bf Step 1}\,\,\, Construction of a flow with zero entropy.\\

 Take $p_0=[(x_0,0)]=\pi(x_0,0)$ where $\pi$ is
the quotient map $\pi: M\times \mathbb{R}\rightarrow \Omega$.

Without loss of generality, we can assume the existence of a
coordinate chart $(\widetilde{V} , \xi)$ of $\Omega$ satisfying the
following:

$(i)$ There exists an open set $V$ of $\Omega$Ħ, such that $p_0\in
V$ and $\overline{V} \subset\widetilde{ V}$ .

$ (ii)$ $\xi(p_0) = 0,  \xi(V ) = B^{m+1}(0,1), \xi(\widetilde{V}) =
B^{m+1}(0,2)$, where $2\leq m = \dim M$.

$(iii)$ There exists $ i_1\in \mathbb{N}$  such that
$$\cl(\pi(B_M(x_0, i_1^{-1} )\times {\{0\}}))\subset V$$
 and
$$\xi(\pi(B_M(x_0, i_1^{-1} )\times \{0\})) \subset \mathcal{R} =\{x = (x_1,\cdots , x_m, x_{m+1}) :
x_{m+1} = 0\},$$ where $\cl(F)$ denotes the closure of a subset
$F\subset \Omega$.

$ (iv)$ $\exists\,i_2\in \mathbb{N}$ such that
$$B_{\Omega}(p_0, i_2^{-1} )\subset V$$
 and $$\xi(B_{\Omega}(p_0, i^{-1})) = B^{m+1}(0,i^{-1})$$
  for any $i_2 <i\in \mathbb{N}$.
   Under these assumptions, there exists $i_3\in \mathbb{N}$ and $i_2 < i_3$,
with the property that for any $i\geq 0$ there exists $1\gg
l_{i_3+i}> 0$ such that $$\cl(\,\pi( B_M(x_0, \frac{1}{ i_3 + i}
)\times [-l_{i_3+i},0]))\subset B_{\Omega}(p,\frac{1}{i_2+i}).$$

We set $i_0 := \max\{i_1, i_2, i_3\}$. For any $i>i_0$, by Lemma
\ref{large measure}, there exists $L( \frac1i)$ such that for any
ergodic measure $\tau$ of $f$, we have $$ \tau(B_M(f^{-1}x_0,
\frac1i ))\geq \frac{1}{L(\frac1i)}:=\delta(i)>0 .$$ We define
$\beta_{-1} := 1$ and $\beta_{i-1}
:=\frac{l_{i_0}+i}{i_0+i}\delta(i_0+i)$ for  $i\geq 1.$ We need the
following Lemma to construct satisfactory smooth vector fields.

\begin{Lem}\label{smooth number} For a given sequence of positive numbers $1 =\beta_0 >\beta_1 >\beta_2 >\cdots
>\beta_i>\cdots$, there exists a $C^{\infty}$ function $w : B^{m+1}(0,2) \rightarrow [0,
1]$ such that

$(1)$ $w=0$  if and only if $x = 0$;

$(2)$ $\|w\mid_{B^{m+1}(\frac{1}{i+1})}\|_{C^0}\leq \beta _{i-1},
\,\,i=0,1,2,\cdots;$

$(3)$ $w\mid _{B^{m+1}(2)\setminus B^{m+1}(1)}=1$.

\end{Lem}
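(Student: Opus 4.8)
The plan is to build the function $w$ by smoothing a radial profile adapted to the sequence $\{\beta_i\}$. Write $\rho(x) = |x|$ for $x \in B^{m+1}(0,2)$, and look for $w$ of the form $w(x) = g(\rho(x))$ on the annulus $\tfrac14 \le \rho \le 2$, modified near the origin to be smooth there. First I would choose, for each $i$, a value $c_i$ with $0 < c_i < \beta_{i-1}$, and on each dyadic-type shell $\{\tfrac{1}{i+1} \le \rho \le \tfrac{1}{i}\}$ define a preliminary continuous profile that equals $c_{i+1}$ at the inner boundary radius $\tfrac1{i+1}$ and $c_i$ at the outer radius $\tfrac1i$, interpolating monotonically; on $\{1 \le \rho \le 2\}$ set it identically $1$. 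Since $\beta_0 = 1$ we can take the profile to reach $1$ continuously at $\rho = 1$. This continuous, monotone, radial function $g_0(\rho)$ already satisfies a weak form of (2) and (3) and vanishes only at $\rho = 0$; the issue is smoothness, both across the shell boundaries and at the origin.

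Next I would smooth $g_0$. Away from the origin — say on $\rho \ge \varepsilon$ for any $\varepsilon > 0$ — the function is locally just a continuous function of one variable, so a standard convolution/mollification argument with a mollifier of support much smaller than the shell widths produces a $C^\infty$ function that still lies below $\beta_{i-1}$ on $B^{m+1}(0,\tfrac1{i+1})$ (shrink the mollifier scale as $i$ grows so the estimate is not spoiled), remains nondecreasing in $\rho$, equals $1$ on $\{\rho \ge 1\}$, and is positive for $\rho > 0$. The new point compared with the naive approach is that one must do this smoothing with scale $\to 0$ near the origin, so that the $C^\infty$ property at interior points is genuine; but this is just a locally finite patching argument using a partition of unity subordinate to the shells, since on any compact subset of $B^{m+1}(0,2) \setminus \{0\}$ only finitely many shells are met.

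The main obstacle is smoothness \emph{at the origin}: we need $w$ to be $C^\infty$ at $0$ while vanishing exactly there, and a radial function $g(\rho)$ is smooth at $0$ only if $g$ extends to an even smooth function of $\rho$, which forces $g$ to vanish to infinite order at the origin unless we are careful. The remedy is to force flatness: arrange the profile so that $g(\rho)$ vanishes faster than every power of $\rho$ as $\rho \to 0$. Concretely, pick the shell values $c_i$ tending to $0$ so fast — e.g.\ $c_i \le \min\{\beta_{i-1}, e^{-i^2}\}$ — that the resulting $g$ satisfies $g(\rho) = o(\rho^N)$ for every $N$; then $w(x) = g(|x|)$ extends by $w(0) = 0$ to a function all of whose partial derivatives exist and vanish at $0$, hence $C^\infty$ there. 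One checks flatness by the standard estimate that a radial function with $|g(\rho)| \le \rho^{N}$ near $0$ for all $N$, together with matching derivative bounds obtained during the mollification (one can mollify so that $|g^{(k)}(\rho)| \le C_k \rho^{N-k}$ on the shell near radius $\rho$), has all derivatives tending to $0$ at the origin. Property (1) holds because the profile is strictly positive for $\rho > 0$, property (3) is built in on $\{1 \le \rho \le 2\}$, and property (2) holds since $w$ on $B^{m+1}(0,\tfrac1{i+1})$ is bounded by $c_i \le \beta_{i-1}$ by monotonicity. Assembling these observations gives the lemma; the only genuinely delicate point, as indicated, is the simultaneous control of the function and all its derivatives near $0$, which is handled by choosing $\{c_i\}$ to decay superpolynomially.
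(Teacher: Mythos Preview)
Your outline is correct, but the route differs from the paper's. You build the radial profile by interpolation plus mollification and a partition of unity, then force flatness at the origin by choosing the shell values $c_i$ to decay super-polynomially; the smoothness of $w(x)=g(|x|)$ at $0$ then follows from the vanishing of all derivatives of $g$ there. The paper instead writes down an explicit series
\[
g(t)=\sum_{i\ge 1} 2^{-i-1}\beta_{i-1}\,\Psi(t-c_i),\qquad \Psi(t)=e^{-1/t}\ (t>0),\ \Psi(t)=0\ (t\le 0),
\]
with $c_i=\tfrac{1}{i+1}$. Because $\Psi$ and all its derivatives vanish on $(-\infty,0]$, the series and every termwise-differentiated series converge uniformly, so $g$ is $C^\infty$ on $(-1,c_0]$, flat at $0$, and one reads off the bound $g(t)\le \beta_k\Psi(1)2^{-k}$ for $t<c_k$ directly; then $g$ is extended smoothly to equal $1$ on $[1,2]$ and one sets $w(x)=g(|x|)$. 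The trade-off: the paper's construction is shorter and gives smoothness and the size bound in one stroke (no mollifier scales to tune, no partition-of-unity derivatives to track), while your approach is more generic and would adapt to other target profiles. One small point in your write-up: the sentence ``which forces $g$ to vanish to infinite order at the origin unless we are careful'' is misleading---even smooth radial functions need not be flat (think $g(\rho)=\rho^2$); the real reason you impose flatness is that your mollified $g$ has no reason to be an even function of $\rho$, and flatness is the clean sufficient condition ensuring $g(|x|)\in C^\infty$ at $0$. Also, the asserted derivative control $|g^{(k)}(\rho)|\le C_k\rho^{N-k}$ through mollification is believable but is exactly the step that carries all the work; if you pursue this route you should record that with shell width $\sim 1/i^2$ and mollifier scale comparable to it, the $k$-th derivative on the $i$-th shell is $\lesssim c_i\, i^{2k}$, which tends to $0$ under your choice $c_i\le e^{-i^2}$.
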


\begin{proof}
Without loss of generality, we assume that
$\lim_{i\rightarrow\infty}\beta_i= 0$.  Let $\Psi(t)$ be the
function:
\begin{equation*}\Psi(t)= \begin{cases}e^{-\frac1t},\quad &0 < t \leq
1,
\\  0, \quad &-1 < t \leq 0.
\end{cases}\end{equation*}
 Let $\{\beta_i\}$ be as
above and suppose ${c_i}$ is any decreasing sequence of positive
numbers $1 =c_{-1} >c_0 >c_1 >\cdots>c_i >\cdots$, with
$\lim_{i\rightarrow\infty}c_i = 0$. For $t <c_0$, let $g(t)$ be the
function on $[-1, c_0]$ defined by the series: $$g(t)
=\sum_{i=1}^{\infty} 2^{-i-1}\beta_{i-1}\Psi(t - c_i).$$ This series
is monotone increasing in $i$ and converges uniformly. It is zero on
$[-1, 0]$ and positive on $(0, c_0]$. For any $k$ and $0 < t < c_k$
we have
$$\sum_{i=k+1}^{\infty} 2^{-i-1}\beta_{i-1}\Psi(t -
c_i)<\frac{\beta_k\Psi(1)}{2^k} .$$ Further, since the derivatives
of the partial sums of this series converge uniformly, we may take
the derivative of the sum and we have that:
$$g'(t)=\sum_{i=1}^{\infty} 2^{-i-1}\beta_{i-1}\Psi'(t - c_i)<\infty.$$
By induction, we may also conclude that $$g^{(l)}(t)
=\sum_{i=1}^{\infty} 2^{-i-1}\beta_{i-1}\Psi^{(l)}(t - c_i)$$
converges uniformly and
$$\lim_{t\rightarrow0}g^{(l)}(t) = 0.$$ We can now clearly extend $g(t)$ to the
interval $[0, 2]$ in such a way that $g$ is $C^{\infty}$ and $g(t) =
1$ for $t\in [1, 2]$. To finish the proof of the lemma, we set $c_i
=\frac{1}{i+1}$ in the above construction of $g(t)$ and let $w(x) =
g(|x|)$ on $B^{m+2}(2)$. Because of the construction  this function
is $C^{\infty}$ smooth at 0 and on $B^{m+2}(2)$.
\end{proof}

Using Lemma \ref{smooth number}, one can find a $C^{\infty}$
function $\omega_1 : \xi(\widetilde{V})\rightarrow [0, 1]$ with the
properties:

$(i)$ $\omega_1\mid_{\xi(\widetilde{V}\setminus V)}\equiv 1$;

$(ii)$ $\| \omega_1 \mid_{B^{m+1}(0,\frac{ 1}{ i_2+i}
)}\|\leq\beta_{i-1}$;

$(iii)$  $\omega_1(0) = 0$ and $0 <\omega_1(a)\leq 1$ for $ 0\neq
a\in \xi(\widetilde{V})$.

 We then define a function $\alpha  \in C^{\infty}(\Omega, [0,
1])$ as follows:
\begin{equation*}\alpha(q)
:=\begin{cases}\omega_1\circ \xi(q),\,\,q\in
\widetilde{V};\\1,\,\,q\in \Omega\setminus
\widetilde{V}.\end{cases}\end{equation*}

Then
$$\|\alpha\mid_{B_{\Omega}(p,\frac{1}{i_2+i})}\|_{C^0}=\sup_{x\in
B_{\Omega}(p,\frac{1}{i_2+i})}\{\alpha(x)\}\leq \beta_{i-1},$$
 where we
assume, without loss of generality, that$\|\xi\|_{C^0}\leq 1$.   We
then define $Y :=\alpha X$ and let $\phi_t$ denote the flow induced
by $Y$. Recall the function $\gamma : M \rightarrow R\cup
\{\infty\}$ in Lemma \ref{smooth number} and observe that for any
$x\in B_{M}(f^{-1}(x_0),\frac{1}{i_2+i})$,
$$l_{i_0+i}=\int_{t(x)}^{\gamma(x)}\sqrt{<\alpha(\phi_s(x)X(\phi_s(x)), \alpha(\phi_s(x)X(\phi_s(x))))>}ds,$$
where $t(x)>0$ satisfies $\phi_{t(x)}(x)=\phi_{1-l_{i_0+i}}(x)$.
Then
$$\gamma(x)\geq \gamma(x)-t(x)\geq
\frac{l_{i_0+i}}{\|\alpha\mid_{B_{\Omega}(p,\frac{1}{i_2+i})}\|\|X\|}\geq
\frac{l_{i_0+i}}{\beta_{i-1}\|X\|}=\frac{i_0+i}{\delta(i_0+i)\|X\|}$$
for any $x\in B_M(f^{-1}(x_0),\frac{1}{i_0+i})$. Thus,
$$\gamma\mid_{B_M(f^{-1}(x_0),\frac{1}{i_0+i})}\geq \frac{i_0+i}{\delta(i_0+i)\|X\|}.$$

For any ergodic measure $\nu$ of $f$,
$$E(\gamma)=\int_M\gamma(x)d\nu(x)
\geq\frac{i_0+i}{\delta(i_0+i)\|X\|}\nu(B_M(f^{-1}(x_0),\frac{1}{i_0+i}))\geq
\frac{i_0+i}{\|X\|}\rightarrow +\infty,$$ as $i\rightarrow +\infty$.
So, by Lemma \ref{atomic measures} all ergodic measures of
$\phi_{\alpha X}$ are atomic, which implies $h(\phi_{\alpha X})=0$.
In fact, $\phi_{\alpha X}=\phi_t$ has only one invariant measure
$\delta_{p_0}$.

Take another point $p_1\in \Omega\setminus \cl(\widetilde{V})$.
Choose a suitable smooth  coordinate chart $(\widetilde{U},\zeta)$
of $\Omega$ satisfying that

$(i)$ there is an open set $U\subset \cl(U)\subset
\widetilde{U}\subset \cl(\widetilde{U})\subset \Omega\setminus
\cl(\widetilde{V})$.

$(ii)$  $\zeta(p_1)=0,$ $\zeta(U)=B^{m+1}(0,4)$,
$\zeta(\widetilde{U})=B^{m+1}(0,8)$.

 $(iii)$ the flow $\psi_0:=\zeta\circ\phi_{X}\circ\zeta^{-1}\mid_{\zeta(\widetilde{U})}$
induced by the following standard differential equation
\begin{equation*}\label{ simple flow}
\begin{cases}
\dot{x}_1=1;\\
\dot{x}_2=0;\\
\,\,\,\vdots\\
\dot{x}_{m+1}=0.
\end{cases}
\end{equation*}

We select a $C^2$ function $\eta: \mathbb{R}^{m+1}\rightarrow
\mathbb{R}$ satisfying

$(1)$
\begin{equation*}\eta(x)=\begin{cases}(\sum_{i=2}^{m+1}x_i^2)^{\frac32}\,\,\,
\,\,\,&x\in
B^{m+1}(0,2),\,-1\leq x_1\leq1,\\
(\sum_{i=2}^{m+1}x_i^2)^{\frac32}+e^{\frac{1}{x_1^2-1}}\,\,\,&x\in
B^{m+1}(0,2),\,x_1>1\,\mbox{or}\,x_1<-1;\end{cases}\end{equation*}

$(2)$ $\frac{1}{20}< \eta(x)<20$, when $2\leq\|x\|\leq 4$.

$(3)$ $\eta(x)=1$, when $\|x\|\geq 4$.

Noticing that $m+1\geq 5$, it is easy to verify that
$\frac{1}{\eta(x)}$ is integrable with respect to Lebesgue measure
on $B^{m+1}(0,8)$. Define a  function $\alpha_1: \Omega\rightarrow
\mathbb{R}$ as follows:
\begin{equation*}\alpha_1(q)=\begin{cases}\omega_1\circ \xi^{-1}(q)\,\,\,&q\in \widetilde{V};\\
\eta\circ \zeta^{-1}(q)\,\,\,&q\in \widetilde{U};\\
 1\,\,\, &q\in
\Omega\setminus (\widetilde{U}\cup \widetilde{V}).
\end{cases}\end{equation*}
Then $\alpha_1X$ induces a flow $\phi_{\alpha_1X}$ on $\Omega$.
Denote $$F_0=\{x\in B^{m+1}(0,4)\mid -1\leq x_1\leq 1,\quad
x_2=\cdots=x_{m+1}=0\}.$$ Then all points contained in $F_0$ are
singularities of $\phi_{\alpha_1X}$. In what follows, we will show
that the integration of $\frac{1}{\eta(x)}$
  can  guarantee the modifications in $\widetilde{U}$ don't
  contribute entropy of the consequent flows.

\begin{Prop}\label{zero entropy}
$h(\phi_{\alpha_1X})=0$.
\end{Prop}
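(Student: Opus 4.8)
The plan is to reduce the entropy computation for $\phi_{\alpha_1 X}$ to that of $\phi_{\alpha X}=\phi_t$, which has zero entropy because all its ergodic invariant measures are atomic (concentrated on $\delta_{p_0}$). The two vector fields $\alpha X$ and $\alpha_1 X$ agree everywhere outside $\widetilde U$, and $\widetilde U$ was chosen disjoint from $\cl(\widetilde V)$, so the only new recurrent behaviour introduced by passing to $\alpha_1 X$ lives in $\widetilde U$; the singular set $F_0$ and the nearby dynamics are what we must control. The key observation, flagged in the paragraph preceding the statement, is that $\frac{1}{\eta(x)}$ is Lebesgue-integrable on $B^{m+1}(0,8)$ (using $m+1\ge 5$, so that $(\sum_{i\ge 2} x_i^2)^{-3/2}$ is integrable in the $m\ge 4$ transverse variables), which will force the time a typical orbit spends near $F_0$ to be "large" in the reparametrized clock and hence to carry no entropy.

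First I would analyze the invariant measures of $\phi_{\alpha_1 X}$. Any ergodic invariant probability measure $\mu$ either gives full mass to the singular set $F_0\cup\{p_0\}$ (in which case it is atomic and contributes zero entropy), or it gives zero mass to the singularities. In the latter case, I would show $\mu$ is supported in the region where $\alpha_1$ and $\alpha$ differ only on $\widetilde U$, and then set up a suspension/additive-function argument analogous to Lemmas \ref{subadditive}--\ref{atomic measures}: realize $\phi_{\alpha_1 X}$ as a time change of the minimal suspension flow $\phi_X$ via the additive function $\theta(t,x)=\int_0^t \frac{1}{\alpha_1(\phi^X_s(x))}\,ds$ (equivalently, define $\gamma_1(x)$ for the return to the base exactly as $\gamma(x)$ was defined in Lemma \ref{atomic measures}, but for $\alpha_1 X$). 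The orbits of $\phi_X$ are dense (minimality of $f$), so every orbit eventually enters the chart $\widetilde U$ and passes near $F_0$; by the integrability of $\frac{1}{\eta}$ combined with Lemma \ref{large measure} applied to the transverse disk through $p_1$, I would estimate $E_\nu(\gamma_1)=\int \gamma_1\,d\nu$ for every non-atomic ergodic measure $\nu$ of $f$ and show it is $+\infty$ — the passage time through the $\eta$-slowed region diverges in exactly the same way the passage time through the $\omega_1$-slowed region near $p_0$ did in Step 1. Then Lemma \ref{atomic measures} (or its obvious analogue with $\alpha_1$ in place of $\alpha$) yields that $\phi_{\alpha_1 X}$ has only atomic invariant Borel probability measures.

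Once all invariant measures are atomic, they are supported on singularities, hence have zero metric entropy; by the variational principle $h(\phi_{\alpha_1 X})=\sup_\mu h_\mu(\phi_{\alpha_1 X})=0$. The main obstacle I anticipate is the measure-classification step: Lemma \ref{atomic measures} is stated only for $\alpha\in C^1(M,[0,1])$ pulled back by the given chart $\xi$, whereas here the slow-down near $F_0$ comes from $\eta$ in a \emph{different} chart $\zeta$ and the "obstruction set" is a whole arc $F_0$ rather than a single fixed point $p$. I would need to verify that the hypothesis "$E_\mu(\gamma)=+\infty$ for every non-atomic ergodic $\mu$" still forces atomicity in this setting — this should follow the same proof, because the argument there only uses that orbits spend infinite (reparametrized) time near the obstruction and that the base map is minimal, both of which persist. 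A secondary technical point is checking that $\alpha_1 X$ is genuinely $C^2$ (the $(\sum x_i^2)^{3/2}$ term is $C^2$ but not $C^3$, which is why the theorem only claims $C^2$), and that the flow is complete; these are routine given the explicit formula for $\eta$ and the fact that $\eta$ is bounded below away from $F_0$.
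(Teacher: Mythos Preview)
Your proposal contains a genuine conceptual error in the role you assign to the integrability of $1/\eta$. You write that ``the passage time through the $\eta$-slowed region diverges in exactly the same way the passage time through the $\omega_1$-slowed region near $p_0$ did,'' and you invoke the integrability of $1/\eta$ to justify $E_\nu(\gamma_1)=+\infty$. This is backwards. The integrability of $1/\eta$ on $B^{m+1}(0,8)$ is precisely what makes the $\eta$-contribution to the return time \emph{finite} on average (this is exactly why it is used in Proposition~\ref{positive entropy} to show $a_1<\infty$ and hence $h(\phi_{\widehat\alpha_1 X})>0$). The divergence $E_\nu(\gamma_1)=+\infty$ does \emph{not} come from the $\eta$-region near $F_0$; it is inherited entirely from the $\omega_1$-region near $p_0$, which is still present in $\alpha_1$ (since $\alpha_1=\alpha$ on $\widetilde V$). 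Concretely, $\gamma_1$ and $\gamma$ differ only by the time spent in $\widetilde U$, and since $\eta$ is bounded above that difference is bounded below by a constant; hence $E_\nu(\gamma_1)\ge E_\nu(\gamma)-C=+\infty$. Your attempt to combine Lemma~\ref{large measure} at $p_1$ with integrability of $1/\eta$ to force divergence is incoherent: a lower bound on measure together with a \emph{finite} integral cannot produce $+\infty$.

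The paper's proof takes a different route and avoids Lemma~\ref{atomic measures} altogether. It compares $\phi_{\alpha_1 X}$ directly to $\phi_{\alpha X}$ (already known to have $\delta_{p_0}$ as its unique invariant measure) by estimating Birkhoff time averages: using only the \emph{upper} bound $\eta\le 20$, one shows that for every open $U_0\subset\Omega\setminus(\widetilde U\cup\widetilde V)$ and every $p$, the proportion of $\phi_{\alpha_1 X}$-time spent in $U_0$ still tends to $0$. Hence every ergodic invariant measure of $\phi_{\alpha_1 X}$ is supported in $\cl(\widetilde U\cup\widetilde V)$; those in $\widetilde U$ sit on the fixed arc $F_0$ and are atomic, and the only one touching $\widetilde V$ is $\delta_{p_0}$. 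This sidesteps the issue you flag about extending Lemma~\ref{atomic measures} to a non-point singular set. If you want to repair your approach, the correct argument is: $\alpha_1$ retains the $\omega_1$-slowdown, so $E_\nu(\gamma_1)=+\infty$ by comparison with $\gamma$, and then one must genuinely reprove the analogue of Lemma~\ref{atomic measures} allowing the larger zero set $\{p_0\}\cup\zeta^{-1}(F_0)$ --- this is doable but is real additional work, not a triviality.
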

\begin{proof}Given a flow $\phi$ on $\Omega$, for any Borel set $B\in
\mathcal{B}(\Omega)$, $p\in \Omega$ and $t>0$, define
$$I(t,p, \phi, B)=\{0\leq s\leq t\mid \phi(p,s)\in B\}.$$
and
$$J(t,p, \phi, B)=\Leb(I(t,p, \phi, B))=\int_{0}^{t}\chi_{B}(\phi(p,s))ds,$$
where \begin{equation*} \chi_{B}(x)=\begin{cases}1\,\,&x\in
B,\\0\,\,&x\in \Omega\setminus B.\end{cases}
\end{equation*}
Arbitrarily taking  an open set $U_0\in \Omega\setminus
(\widetilde{U}\cup \widetilde{V})$, since $\phi_{\alpha X}$ has only
one invariant probability measure $\delta_{p_0}$,  for any $p\in
\Omega\setminus (\widetilde{U}\cup \widetilde{V})$ we have
$$\lim_{t\rightarrow +\infty}\frac{J(t,p, \phi_{\alpha X}, U_0)}{t}=0.$$
By definition of $\eta$ one can obtain $\|\eta\|\leq20$, which
implies that
\begin{eqnarray*}J(t,p,\phi_{\alpha_1X},\widetilde{U})&=&\int_{I(t,p,
\phi_{\alpha X}, \widetilde{U})}\frac{1}{\alpha_1(\phi_{\alpha
X}(p,s))}ds\\&\geq&\int_{I(t,p, \phi_{\alpha X},
\widetilde{U})}\frac{1}{20}ds\\&=& \frac{1}{20}J(t,p, \phi_{\alpha
X}, \widetilde{U}).\end{eqnarray*} Define
$\lambda(t)=J(t,p,\phi_{\alpha_1X},\widetilde{U})+J(t,p,
\phi_{\alpha X}, \Omega\setminus \widetilde{U})$. Noting that
$\alpha_1(q)=\alpha(q)$ for $q\in \Omega\setminus \widetilde{U}$, we
have
$$J(\lambda(t), p, \phi_{\alpha_1 X},
U_0)=J(t,p, \phi_{\alpha X},  U_0).$$ Therefore,
\begin{eqnarray*}\frac{1}{\lambda(t)}\int_0^{\lambda(t)}\chi_{U_0}(\phi_{\alpha_1 X}(p,s))ds &=&\frac{J(t,
p, \phi_{\alpha X},
U_0)}{J(t,p,\phi_{\alpha_1X},\widetilde{U})+J(t,p, \phi_{\alpha X},
\Omega\setminus
\widetilde{U})}\\
&\leq&\frac{J(t, p, \phi_{\alpha X}, U_0)}{\frac{1}{20}J(t,p,
\phi_{\alpha X}, \widetilde{U})+J(t,p, \phi_{\alpha X},
\Omega\setminus \widetilde{U})}.
\end{eqnarray*}
Moreover,
\begin{eqnarray*}&&\lim_{t\rightarrow+\infty}\frac{J(t, p,
\phi_{\alpha X}, U_0)}{t}=\lim_{t\rightarrow+\infty}\frac{J(t, p,
\phi_{\alpha X}, \widetilde{U})}{t}=0,\\&&
\lim_{t\rightarrow+\infty}\frac{J(t,p, \phi_{\alpha X},
\Omega\setminus \widetilde{U})}{t}=1.\end{eqnarray*} We deduce that
\begin{eqnarray*}&&\limsup_{t\rightarrow+\infty}\frac{1}{\lambda(t)}\int_0^{\lambda(t)}\chi_{U_0}(\phi_{\alpha_1X}(p,s))ds\\
&\leq& \limsup_{t\rightarrow+\infty}\frac{J(t, p,
\phi_{\alpha X}, U_0)}{\frac{1}{20}J(t,p, \phi_{\alpha X},
\widetilde{U})+J(t,p, \phi_{\alpha X},
\Omega\setminus \widetilde{U})}\\
&=&\limsup_{t\rightarrow+\infty}\frac{\frac1tJ(t, p, \phi_{\alpha
X}, U_0)}{\frac{1}{20t}J(t,p, \phi_{\alpha X},
\widetilde{U})+\frac1tJ(t,p, \phi_{\alpha X}, \Omega\setminus
\widetilde{U})}\\&=&0.\end{eqnarray*} So, for any ergodic invariant
measure $\nu$ of $\phi_{\alpha_1X}$, $\supp(\nu)\cap U_0=\emptyset$.
All ergodic measures are  supported  on $\widetilde{U}\cup
\widetilde{V}$. Besides, all ergodic measures in  $\widetilde{U}$
are atomic.
  Thus  $h(\phi_{\alpha_1X})=0$.
\end{proof}

$$$$
{\bf Step 2}\,\,\,Construction of a flow with positive entropy.\\

We select a smooth function $\widehat{\omega}_1:
\mathbb{R}^{m+1}\rightarrow \mathbb{R}$ satisfying

$(1)$ $\widehat{\omega}_1(x)=\|x\|^2=\sum_{i=1}^{m+1}x_{i}^2$, when
$\|x\|\leq \frac12$;

$(2)$ $\frac14<\widehat{\omega}_1(x)<2$, when $\frac12<\|x\|<1$;

$(3)$ $\widehat{\omega}_1(x)=1$, when $\|x\|\geq 1$.

We can see that $\frac{1}{\widehat{\omega}_1(x)}$ is integrable with
respect to Lebesgue measure on $B^{m+1}(0,2)$ since $m+1\geq 3$.

 Define a  new smooth function
$\widehat{\alpha}_1: \Omega\rightarrow \mathbb{R}$ as follows:
\begin{equation*}\widehat{\alpha}_1(p)=\begin{cases}\widehat{\omega}_1\circ \xi^{-1}(p)\,\,\,&p\in \widetilde{V};\\
\eta\circ \zeta^{-1}(p)\,\,\,&p\in \widetilde{U};\\
 1\,\,\, &p\in
\Omega\setminus (\widetilde{U}\cup \widetilde{V}).
\end{cases}\end{equation*} Then
$$a_1:=\int_{\Omega}\frac{1}{\widehat{\alpha}_1(p)}d\overline{\mu}(p)<\infty.$$
As appoint before, denote by  $\phi_{\widehat{\alpha}_1 X}$ the flow
induced by $\widehat{\alpha}_1 X$. Recalling Herman's example, $f$
preserves an ergodic measure $\mu$ equivalent to the Riemannian
volume. In Herman's construction, for any $N\in \mathbb{N}$, $f^N$
also preserves the  volume measure $\mu$ and is minimal. So for any
$K_1>0$ we can take $N$ large so that
$h_{\mu}(f^N)=Nh_{\mu}(f)>a_1K_1$ (Here we don't change the
construction of  the manifold $\Omega$). Without loss of generality,
in what follows we assume $h_{\mu}(f)>a_1K_1$.
\begin{Prop}\label{positive entropy}$h(\phi_{\widehat{\alpha}_1
X})>K_1$.\end{Prop}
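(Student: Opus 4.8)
The plan is to compute the entropy of $\phi_{\widehat{\alpha}_1 X}$ by relating its invariant measures to those of the base map $f$ via the suspension construction. First I would observe that $\widehat{\alpha}_1$ is strictly positive on all of $\Omega$: on $\widetilde{V}$ it equals $\widehat{\omega}_1\circ\xi^{-1}$ which vanishes only at $\xi^{-1}(0)$... but wait, by construction $\widehat{\omega}_1(x)=\|x\|^2$ near $0$, so it vanishes at the origin. Here is the key point that makes things work: although $\widehat{\alpha}_1$ has a zero at $p_0$, the function $1/\widehat{\alpha}_1$ is still \emph{integrable} with respect to $\overline{\mu}$ because $m+1\geq 3$ and the singularity $\|x\|^{-2}$ is integrable in dimension $\geq 3$ (similarly $1/\eta$ is integrable because $m+1\geq 5$). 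This is exactly why the constant $a_1=\int_\Omega \widehat{\alpha}_1^{-1}\,d\overline{\mu}$ is finite.

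Next I would apply the machinery of Section 3.1. The flow $\phi_{\widehat{\alpha}_1 X}$ is obtained from the standard suspension flow $\phi_X$ (whose vector field is $X$) by a time change: reparametrizing $X$ by the scalar $\widehat{\alpha}_1$ corresponds, at the level of the base, to replacing the roof function $1$ by the additive function defined by $a(x) := \widehat{\alpha}_1^{-1}$ along orbits. Concretely, by Lemma~\ref{subadditive} the function $\theta(t,x)=\int_0^t \widehat{\alpha}_1^{-1}(\phi_X(x,s))\,ds$ is an integrable additive function, and by Lemma~\ref{suspend of measures} the measure $\mu$ on $M$ lifts to an invariant ergodic measure $\widehat{\mu}$ of $\phi_{\widehat{\alpha}_1 X}$ on $\Omega$, with normalizing factor $1/E_{\mu}(a) = 1/a_1$ (note $E_{\mu}(a) = \int_\Omega \widehat{\alpha}_1^{-1}\, d\overline{\mu} = a_1$ by Fubini/Proposition~\ref{enttropy of suspend}, using that $\mu$ corresponds to $\overline\mu$ under the standard suspension). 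Then I would invoke the Abramov formula for the entropy of a time-changed flow: $h_{\widehat{\mu}}(\phi_{\widehat{\alpha}_1 X}) = h_{\overline{\mu}}(\phi_X)/E_{\overline{\mu}}(\widehat{\alpha}_1^{-1}) = h_\mu(f)/a_1$, where the last equality uses Proposition~\ref{enttropy of suspend}, $h_{\overline\mu}(\phi_X)=h_\mu(f)$. Since we arranged $h_\mu(f) > a_1 K_1$, this gives $h_{\widehat{\mu}}(\phi_{\widehat{\alpha}_1 X}) > K_1$, and hence by the variational principle $h(\phi_{\widehat{\alpha}_1 X}) \geq h_{\widehat{\mu}}(\phi_{\widehat{\alpha}_1 X}) > K_1$.

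The main obstacle I anticipate is justifying the Abramov-type entropy formula in this setting, because the time change $\widehat{\alpha}_1$ vanishes at the singularity $p_0$ (and $\eta$ blows up on $F_0$ in the companion flow), so the reparametrization is not bi-Lipschitz and the standard Abramov theorem for flows does not apply off the shelf. The way around this is that $\widehat{\mu}$, being the lift of the nonatomic measure $\mu$ which gives $B_M(f^{-1}x_0,\varepsilon)$ uniformly positive mass (Lemma~\ref{large measure}), assigns zero mass to the singular set $\{p_0\}$ and more generally spends zero proportion of time in any neighborhood on which the time change degenerates badly — in fact $\widehat\mu$ is supported away from $F_0\cup\{p_0\}$ up to a null set, and on the support the reparametrization is a genuine bi-continuous time change with integrable roof, so Abramov applies. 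I would make this precise by the same integrability estimate ($1/\widehat\alpha_1\in L^1(\overline\mu)$) that gives $a_1<\infty$: it forces $\theta(t,x)\to\infty$ as $t\to\pm\infty$ for $\mu$-a.e. $x$, which is condition (3) in the definition of additive function, so $\widehat\mu$ is a bona fide invariant probability measure and the entropy computation is legitimate.

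Finally I would remark that the conclusion $h(\phi_{\widehat{\alpha}_1 X})>K_1$, combined with the zero-entropy flow $\phi_{\alpha_1 X}$ of Step 1, furnishes the pair for Theorem~\ref{Main theorem2}(1) once one checks $\phi_{\alpha_1 X}$ and $\phi_{\widehat\alpha_1 X}$ are equivalent — but equivalence is immediate here since both are obtained from $\phi_X$ by multiplying the vector field by a strictly positive-off-a-null-set scalar function, hence share the same oriented orbits, so the identity map on $\Omega$ is the required equivalence. The remaining property, $EP(\psi)=0$, will come from the fact that the only periodic orbits are the singular ones produced inside $\widetilde U$ and $\widetilde V$, which I would count separately; that is deferred to the subsequent part of the proof and not needed for the present Proposition.
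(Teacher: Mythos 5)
Your argument is essentially the paper's own proof: both construct the invariant measure with density $1/\widehat{\alpha}_1$ with respect to $\overline{\mu}$ (finite total mass $a_1$ by the integrability of $\|x\|^{-2}$ in dimension $m+1\geq 3$), apply the Abramov-type relation $h_{\widehat{\mu}_1}(\phi_{\widehat{\alpha}_1X})\,\widehat{\mu}_1(\Omega)=h_{\overline{\mu}}(\phi_X)\,\overline{\mu}(\Omega)$ together with $h_{\overline{\mu}}(\phi_X)=h_{\mu}(f)>a_1K_1$, and conclude by the variational principle. Your extra care about why the time change is legitimate despite $\widehat{\alpha}_1$ vanishing at $p_0$ is a welcome elaboration of what the paper delegates to Lemma~\ref{suspend of measures}, but it does not change the approach.
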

\begin{proof}First Lemma \ref{suspend of measures}
and Lemma \ref{enttropy of suspend} apply to give that
$h_{\bar{\mu}}(\phi_{ X})=h_{\mu}(f)>a_1K_1$. Define a new measure
$\widehat{\mu}_1$ on $\Omega$ as follows
$$\widehat{\mu}_1(B)=\int_{B}d\widehat{\mu}_1(x)=\int_{B}\frac{1}{\widehat{\alpha}_1(x)}d\bar{\mu}(x)$$
for all $B\in \mathcal{B}(\Omega)$. By Lemma \ref{suspend of
measures}, $\widehat{\mu}_1$ is an invariant ergodic measure of
$\phi_{\widehat{\alpha}_1 X}$ and
$$\widehat{\mu}_1(\Omega)\leq \int_{\Omega}\frac{1}{\widehat{\alpha}_1(x)}d\bar{\mu}(x)<\infty.$$
Noting that $\mu$ is ergodic, by Theorem \ref{enttropy of suspend}
we have
$$h_{\widehat{\mu}_1}(\phi_{\widehat{\alpha}_1 X})\widehat{\mu}_1(\Omega)=h_{\bar{\mu}}(\phi_{X})\bar{\mu}(\Omega)$$
which gives rise to $h_{\widehat{\mu}_1}(\phi_{\widehat{\alpha}_1
X})\geq \frac{h_{\bar{\mu}}(\phi_{X})}{a_1}>K_1$. By the variational
principle (see for example $\S$8.2 of \cite{Walters}) we conclude
that
$$h(\phi_{\widehat{\alpha}_1 X})\geq
h_{\widehat{\mu}_1}(\phi_{\widehat{\alpha}_1 X})>K_1.$$
\end{proof}
\begin{Prop}\label{equivalence}$\phi_{\alpha_1X}$ and
$\phi_{\widehat{\alpha}_1X}$ are
equivalent.\end{Prop}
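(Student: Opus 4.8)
The plan is to exhibit an explicit orbit equivalence $\pi\colon\Omega\to\Omega$ between $\phi_{\alpha_1 X}$ and $\phi_{\widehat\alpha_1 X}$. The key observation is that both vector fields are of the form $g\cdot X$ with $g$ a smooth nonnegative function that vanishes precisely on the same invariant set: indeed $\alpha_1$ vanishes exactly on $\xi^{-1}(0)=\{p_0\}$ together with the segment $F_0=\zeta^{-1}(\{-1\le x_1\le 1,\ x_2=\cdots=x_{m+1}=0\})$, and likewise $\widehat\alpha_1$ vanishes exactly on $\{p_0\}\cup F_0$ (note $\widehat\omega_1(x)=\|x\|^2$ near $0$, $\eta(x)=(\sum_{i\ge2}x_i^2)^{3/2}$ on the relevant part of $\widetilde U$). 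Away from these singular sets, both flows have exactly the same oriented orbits as the original flow $\phi_X$ — only the time parametrization differs — so on $\Omega\setminus(\{p_0\}\cup F_0)$ one may simply take $\pi=\mathrm{id}$. The content is to check that the identity map is a genuine orbit equivalence, i.e. that it carries each $\phi_{\alpha_1X}$-orbit onto a full $\phi_{\widehat\alpha_1X}$-orbit, including the singular orbits, and that it respects time orientation.

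First I would recall the standard fact that if $X$ and $Y=gX$ with $g\ge0$, $g$ smooth, then on the open set $\{g>0\}$ the two flows have identical unoriented orbits and the reparametrization $t\mapsto s(t)$ defined by $ds/dt = g(\phi_X(p,t))$ (equivalently $s(t)=\int_0^t g(\phi_X(p,r))\,dr$) conjugates the time-$t$ map of one to the time-$s(t)$ map of the other; since $g>0$ this reparametrization is strictly increasing, hence preserves orientation. Applying this with $g=\alpha_1$ and then with $g=\widehat\alpha_1$ shows that on $\Omega\setminus(\{p_0\}\cup F_0)$ every $\phi_{\alpha_1X}$-orbit and every $\phi_{\widehat\alpha_1X}$-orbit coincides, as an oriented point set, with the corresponding $\phi_X$-orbit; thus the identity sends oriented orbits to oriented orbits there. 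Second, I would verify the singular orbits match up trivially: the points of $\{p_0\}\cup F_0$ are fixed points of both $\phi_{\alpha_1X}$ and $\phi_{\widehat\alpha_1X}$ (since $\alpha_1=\widehat\alpha_1=0$ there), so each such point is its own orbit under both flows and the identity trivially matches them. Combining, $\pi=\mathrm{id}$ satisfies the defining property $\{\phi_{\alpha_1X}(x,t):t\in\mathbb R\}=\{\phi_{\widehat\alpha_1X}(x,t):t\in\mathbb R\}$ for every $x\in\Omega$.

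The one point that needs a little care — and is the main obstacle — is the behaviour of orbits that accumulate on or emanate from the singular set, i.e. orbits that "spend time near" $p_0$ or $F_0$. Because $\alpha_1$ and $\widehat\alpha_1$ both tend to $0$ there, the forward (or backward) orbit of a regular point $p$ may take infinite time to reach the singular set under one flow; but this does not affect the orbit as a point set, only its parametrization, and it affects both $\phi_{\alpha_1X}$ and $\phi_{\widehat\alpha_1X}$ in the same way relative to $\phi_X$. So the equality of orbits as sets is unaffected. What one should check is that a $\phi_X$-orbit which is a whole orbit of $\phi_{\alpha_1X}$ is again a whole orbit of $\phi_{\widehat\alpha_1X}$ and not, say, split into two or merged: but since $\{\alpha_1>0\}=\{\widehat\alpha_1>0\}=\Omega\setminus(\{p_0\}\cup F_0)$ exactly, a $\phi_X$-orbit in the regular region stays a single orbit under both reparametrizations, and no regular orbit crosses into the singular set. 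Hence the decomposition of $\Omega$ into orbits is literally the same for the two flows, the identity realizes the equivalence, and in particular $\phi_{\alpha_1X}$ and $\phi_{\widehat\alpha_1X}$ are equivalent. Writing $\psi:=\phi_{\widehat\alpha_1X}$ and $\widehat\psi:=\phi_{\alpha_1X}$, Propositions \ref{positive entropy} (and its later periodic-orbit refinement) and \ref{zero entropy} then complete the proof of Theorem \ref{Main theorem2}.
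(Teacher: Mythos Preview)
Your proof is correct and takes essentially the same approach as the paper: the identity map on $\Omega$ is the equivalence, since $\alpha_1$ and $\widehat\alpha_1$ have the same zero set and are nonnegative elsewhere, so the two flows share identical oriented orbits. Your treatment is in fact more careful than the paper's, which (slightly imprecisely) names only the points $p_0,p_1$ as singularities rather than the full segment $\zeta^{-1}(F_0)$ you correctly identify.

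One caution about your closing sentence: the flows $\psi$ and $\widehat\psi$ of Theorem~\ref{Main theorem2} are \emph{not} simply $\phi_{\widehat\alpha_1 X}$ and $\phi_{\alpha_1 X}$. Several further modifications (Steps~3--5 in the paper) are still required---tearing the segment $F_0$ into a ball of singularities to recover $C^2$ smoothness after a later perturbation, and then embedding the two-dimensional flows $\phi_{Z_1},\phi_{Z_2}$ inside that ball to produce the desired periodic-orbit growth rates. So Propositions~\ref{zero entropy} and~\ref{positive entropy} alone do not complete the theorem.
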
\begin{proof}The identity map on $\Omega$ takes
orbits of one flow $\phi_{\alpha_1X}$ to orbits of the other
$\phi_{\widehat{\alpha}_1X}$ since the singular points $p_0, p_1$
mapped to themselves and elsewhere $\alpha_1$ and
$\widehat{\alpha}_1$ are positive.   The assumption that $\alpha_1$
and $\widehat{\alpha}_1$ are non-negative also implies preservation
of time orientation and hence the equivalence of $\phi_{\alpha_1X}$
and $\phi_{\widehat{\alpha}_1X}$.\end{proof}

 {\bf Step 3}\,\,\, Tear  the segment $A_0$ to be ball-like.\\

 We begin by choosing a smooth function $\gamma_0: \mathbb{R}\rightarrow
\mathbb{R}$ as follows \begin{eqnarray*} \gamma_0(x)=\begin{cases}0
\,\,\,&x\leq -1;\\e^{\frac{1}{x^2-1}}\,\,\,&-1<x<1;
\\0\,\,\,&x\geq1.
\end{cases}
\end{eqnarray*}
Clearly $\gamma_0(x)$ is $C^{\infty}$ smooth and has zero
derivatives of all orders at $-1$ and $1$.  Then consider two
families of $C^{\infty}$ curves:
\begin{eqnarray*}\rho_a(s)&=&(s, a\gamma_0(s)) \,\,\,\,\,0\leq a\leq
1;\\[2mm]
\sigma_b(s)&=&\begin{cases}(s, \gamma_0(s)+b) \,\,&0\leq b\leq
1;\\
(s,(2-b)(\gamma_0(s)+1)+2(b-1))\,\,&1\leq b\leq 2,\end{cases}
\end{eqnarray*}where $-2<s<2$. Direct computations give rise to the
following expressions:
\begin{eqnarray*}\frac{d\rho_a(x_1)}{dx_1}&=&(1, a\gamma_0'(x_1)) \,\,\,\,\,0\leq
a<
1,\\
\frac{d\sigma_b(x_1)}{dx_1}&=&\begin{cases}(1, \gamma_0'(x_1))
\,\,&0\leq b<
1;\\
(1,\,(2-b)\gamma_0'(x_1))\,\,&1\leq b\leq2.\end{cases}
\end{eqnarray*}
Denote  regions
$$U_1=\{(x_1, x_2)\mid x_2=\rho_a(x_1),
\,\,\,-2<x_1<2,\,0\leq a\leq 1\},$$
$$V_1=\{(x_1, x_2)\mid x_2=\sigma_b(x_1),
\,\,\,-2<x_1<2,\,0\leq b\leq 1\},$$
$$W_1=\{(x_1, x_2)\mid x_2=\sigma_b(x_1),
\,\,\,-2<x_1<2,\,1\leq b\leq 2\}.$$ Now we define a projection
$\triangle: U_1\cup V_1\cup W_1\rightarrow U_1\cup V_1\cup W_1$ as
follows
\begin{eqnarray*}
\triangle(x)=(x_1,0)\quad &\mbox{if}\,\,x=(x_1,x_2)\in \rho_a;\\
\triangle(x)=(x_1,b) \quad &\mbox{if}\,\,x=(x_1,x_2)\in \sigma_b.\\
\end{eqnarray*}
Obviously $\Delta$ is continuous. Moreover,
$$\triangle\mid_{\rho_a}: \rho_a\rightarrow (-2,2)\times \{0\},\,\,\triangle\mid_{\sigma_b}: \sigma_b\rightarrow (-2,2)\times \{b\} $$
are both onto $C^{\infty}$ diffeomorphisms.
  For every $1\leq k\leq m$, consider the standard
embedding $\varrho_k: \mathbb{R}^k\rightarrow \mathbb{R}^{m+1}$
given by
$$\varrho_k(x_1,x_2,\cdots,x_k)=(x_1,x_2,\cdots,x_k,0,\cdots,0).$$ The
projection $\pi_k$ is defined by
$$\pi_k(x_1,x_2,\cdots,x_k,x_{k+1},\cdots,x_{m+1})=x_k.$$ Let
$\varphi_0(y,t):=\zeta\circ \varphi_{\alpha_1 X}(\zeta^{-1}(y), t)$
for $y\in B^{m+1}(0,8)$. Denote a  restricted flow $\widehat{\phi}$
on $U_1\cup V_1\cup W_1$
$$\widehat{\phi}((x_1,x_2),t): =(\pi_1\circ\varphi_0(\varrho_2((x_1,x_2)),t), \pi_2\circ\varphi_0(\varrho_2((x_1,x_2)),t)).$$
Now we define a flow $\varphi_1$ on $U_1\cup V_1\cup W_1$ as follows
$$\varphi_1\mid_{\rho_a}=(\triangle\mid_{\rho_a})^{-1}\circ\widehat{\phi}\circ (\triangle\mid_{\rho_a}).$$
$$\varphi_1\mid_{\sigma_b}=(\triangle\mid_{\sigma_b})^{-1}\circ\widehat{\phi}\circ (\triangle\mid_{\sigma_b}).$$
Next, we calculate the expression of the vector field $Z(x)$
associated with the flow $\varphi_1$.

Case 1:  $x=(x_1,x_2)\in\rho_a$.  $a\gamma_0(x_1)=x_2$, so
$a=\frac{x_2}{\gamma_0(x_1)}$. It follows that
\begin{eqnarray*}\frac{d\varphi_1(x,t)}{dt}\mid_{t=0}&=&d((\triangle\mid_{\rho_a})^{-1})\frac{d}{dt}\widehat{\phi}\circ
(\triangle\mid_{\rho_a})\mid_{t=0}\\
&=&d((\triangle\mid_{\rho_a})^{-1})\frac{d}{dt}\widehat{\phi}((x_1,0),t)\mid_{t=0}\\
&=&(\frac{d}{dt}\widehat{\phi}((x_1,0),t)\mid_{t=0},\,\,
u(a)\gamma_0'(x_1)\frac{d}{dt}\widehat{\phi}((x_1,0),t)\mid_{t=0})\\
&=&\eta(\varrho_1(x_1))(1,\,\,a\gamma_0'(x_1));\end{eqnarray*}

 Case
2:  $x=(x_1,x_2)\in \sigma_{b}$ for $0\leq b<1$.
$x_2=\gamma_0(x_1)+b$, so $b=x_2-\gamma_0(x_1)$.
\begin{eqnarray*}\frac{d\varphi_1(x,t)}{dt}\mid_{t=0}&=&d((\triangle\mid_{\sigma_b})^{-1})\frac{d}{dt}\widehat{\phi}\circ
(\triangle\mid_{\sigma_b})\mid_{t=0}\\
&=&d((\triangle\mid_{\sigma_b})^{-1})\frac{d}{dt}\widehat{\phi}((x_1,b),t)\mid_{t=0}\\
&=&\eta(\varrho_2(x_1,x_2-\gamma_0(x_1)))(1,\,\gamma_0'(x_1));\end{eqnarray*}

Case 3:  $x=(x_1,x_2)\in \sigma_{b}$ for $1\leq b\leq 2$.
$x_2=(2-b)(\gamma_0(x_1)+1)+2(b-1)$.
\begin{eqnarray*}\frac{d\varphi_1(x,t)}{dt}\mid_{t=0}&=&\eta(\varrho_2(x_1,b))(1,\,(2-b)\gamma_0'(x_1))\\
&=&\eta(\varrho_2(x_1,\frac{x_2-2\gamma_0(x_1)}{1-\gamma_0(x_1)}))(1,\,\frac{2-x_2}{1-\gamma_0(x_1)}\gamma_0'(x_1)).\end{eqnarray*}
Hence,
\begin{equation*}Z(x)=\begin{cases}\eta(\varrho_1(x_1))(1,\,\,a\gamma_0'(x_1)),&x\in U_1;\\
\eta(\varrho_2(x_1,x_2-\gamma_0(x_1)))(1,\,\gamma_0'(x_1)),&x\in V_1;\\
\eta(\varrho_2(x_1,\frac{x_2-2\gamma_0(x_1)}{1-\gamma_0(x_1)}))(1,\,\frac{2-x_2}{1-\gamma_0(x_1)}\gamma_0'(x_1)),&x\in
W_1;\\
 \eta(\varrho_2(x_1,x_2))(1,0)\,\,\, &x\in
\mathbb{R}^2\setminus (U_1\cup V_1\cup W_1).
\end{cases}\end{equation*}
It is verified that $Z(x)$ is $C^{2}$ with respect to $x\in U_1\cup
V_1$. Furthermore, $Z(x)=0$ for all $x=(x_1,x_2)\in U_1$ with
$-1\leq x_1\leq 1$.

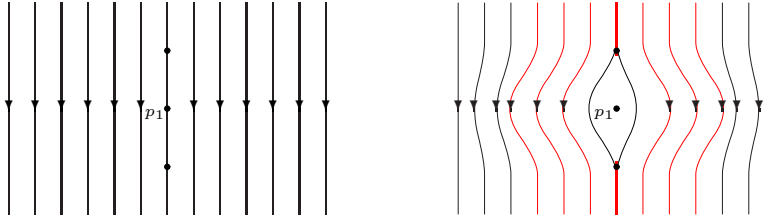
\begin{figure}[h]\label{iterate}
\begin{center}
\begin{picture}(180,120)(-30,-12)

\put(-50,0){\begin{picture}(100,100)

\put(0,-20){\line(0,1){80}}

\multiput(-10,-20)(-10,0){6}{\line(0,1){80}}
\multiput(10,-20)(10,0){6}{\line(0,1){80}} \put(0,20){\circle*{2}}

\put(0,42){\circle*{2}} \put(0,-2){\circle*{2}}

\multiput(-10,20)(-10,0){6}{\vector(0,-1){}}

\multiput(10,20)(10,0){6}{\vector(0,-1){}}

\put(-4.5,18){\makebox(0,0){\tiny $p_1$}}

\end{picture}
}

\put(120,0){\begin{picture}(100,100)

\spline(0,60)(0,50)(0,40)(-14,20)(0,0)(0,-10)(0,-20)
\spline(0,60)(0,50)(0,40)(10,20)(0,0)(0,-10)(0,-20)

{\color{red}\put(0,60){\line(0,-1){20}} \put(0,0){\line(0,-1){20}}
\multiput(0,0)(-10,0){3}{\spline(-10,60)(-10,50)(-10,40)(-24,20)(-10,0)(-10,-10)(-10,-20)}
\multiput(0,0)(10,0){3}{\spline(10,60)(10,50)(10,40)(24,20)(10,0)(10,-10)(10,-20)}
}

\spline(-40,60)(-40,50)(-40,40)(-47,20)(-40,0)(-40,-10)(-40,-20)
\spline(-50,60)(-50,50)(-50,40)(-55,20)(-50,0)(-50,-10)(-50,-20)
\spline(-60,60)(-60,50)(-60,40)(-60,20)(-60,0)(-60,-10)(-60,-20)

\spline(40,60)(40,50)(40,40)(47,20)(40,0)(40,-10)(40,-20)
\spline(50,60)(50,50)(50,40)(55,20)(50,0)(50,-10)(50,-20)
\spline(60,60)(60,50)(60,40)(60,20)(60,0)(60,-10)(60,-20)

\put(-20,20){\vector(0,-1){}}

\put(-30,20){\vector(0,-1){}}

\put(-40,20){\vector(0,-1){}}

\put(-45.5,20){\vector(0,-1){}}

\put(-54,20){\vector(0,-1){}}

\put(-60,20){\vector(0,-1){}}

\put(20,20){\vector(0,-1){}}

\put(30,20){\vector(0,-1){}}

\put(40,20){\vector(0,-1){}}

\put(45.5,20){\vector(0,-1){}}

\put(54,20){\vector(0,-1){}}

\put(60,20){\vector(0,-1){}}

\put(0,42){\circle*{2}} \put(0,-2){\circle*{2}}

 \put(0,20){\circle*{2}}
\put(-4.5,18){\makebox(0,0){\tiny $p_1$}}

\end{picture}
}

\end{picture}
\end{center}
\caption{\,Flows $\widehat{\phi}$ and $\varphi_1$ on $U_1\cup
V_1\cup W_1$}
\end{figure}


 In order to get a global flow on $\Omega$, we use the usual $(m+1)$-dimensional orthogonal group, denoted by $O(m)$, to rotate $Z$. For any
$A\in O(m)$, define
$$\widetilde{A}=\begin{pmatrix}1&0\\0&A\end{pmatrix}.$$
Let $\widetilde{U}_1=O(m)\varrho_2U_1$,
$\widetilde{V}_1=O(m)\varrho_2V_1$,
$\widetilde{W}_1=O(m)\varrho_2W_1$.
 Given $x=(x_1,\cdots,
x_{m+1})=\widetilde{A}\circ\varrho_2(x_1,\,\sqrt{\sum_{i=2}^{m+1}x_i^2})\in
B^{m+1}(0,4)$ for some $A\in O(m)$, denote the  vector field
$\widetilde{Z}$ as the rotation of $Z$ with the precise form
$$\widetilde{Z}(x)=\frac{d}{dt}\mid_{t=0}\widetilde{A}\circ\varrho_2\circ\varphi_1((x_1, \sqrt{\sum_{i=2}^{m+1}x_i^2}),t)=d(\widetilde{A}\circ\varrho_2)Z((x_1,\sqrt{\sum_{i=2}^{m+1}x_i^2})).$$
Together with the construction of $Z$, it follows that
\begin{equation*}\widetilde{Z}(x)=\begin{cases}
\eta(\varrho_1(x_1))(1,\,0,\cdots,0),\,\,\,(x_1,\sqrt{\sum_{i=2}^{m+1}x_i^2})\in U_1;\\
\eta(\varrho_2(x_1,\sqrt{\sum_{i=2}^{m+1}x_i^2}-\gamma(x_1)))(1,\,\gamma'(x_1)\frac{x_2}{\sqrt{\sum_{i=2}^{m+1}x_i^2}},\cdots,
\gamma'(x_1)\frac{x_{m+1}}{\sqrt{\sum_{i=2}^{m+1}x_i^2}}),\,\,\,\\\,\,\,\,\,(x_1,\sqrt{\sum_{i=2}^{m+1}x_i^2})\in
V_1.
\end{cases}
\end{equation*}
 Then $\widetilde{Z}(x)$ is $C^2$ on $\widetilde{U}_1\cup \widetilde{V}_1$ and the
corresponding flow $\phi_{\widetilde{Z}}$ is given by
$$\phi_{\widetilde{Z}}(x,t)=\widetilde{A}\circ\varrho_2\circ\varphi_1((x_1,\sqrt{\sum_{i=2}^{m+1}x_i^2}),t).$$
Now we define $\widetilde{\pi}: \widetilde{U}_1\cup
\widetilde{V}_1\cup \widetilde{W}_1\rightarrow \widetilde{U}_1\cup
\widetilde{V}_1\cup \widetilde{W}_1$ as follows
\begin{eqnarray*}
\widetilde{\pi}(x)&=&\varrho_1(x_1), \,\,\,\mbox{if}\,x\in \widetilde{A}\circ\varrho_2(\rho_a)\subset \widetilde{U}_1;\\
\widetilde{\pi}(x)&=&\widetilde{A}\circ\varrho_2((x_1,b)),
\,\,\,\mbox{if}\,x\in \widetilde{A}\circ\varrho_2(\sigma_b) \subset
\widetilde{V}_1\cup \widetilde{W}_1.
\end{eqnarray*}
Therefore,
$$\widetilde{\pi}\circ\phi_{\widetilde{Z}}=\varphi_0\circ \widetilde{\pi} \quad  \,\,\,\mbox{for}\,x\in \widetilde{U}_1\cup
\widetilde{V}_1\cup \widetilde{W}_1.$$

 (1) If $p\in \zeta^{-1}(\widetilde{U}_1\cup
\widetilde{V}_1\cup \widetilde{W}_1)$, define
$\widehat{\pi}=\zeta^{-1}\circ\widetilde{\pi}\circ\zeta$,
$\psi_1=\widehat{\psi}_1=\zeta^{-1}\circ\phi_{\widetilde{Z}}\circ\zeta$;

(2) If $p\in\Omega\setminus \zeta^{-1}(\widetilde{U}_1\cup
\widetilde{V}_1\cup \widetilde{W}_1)$,  define $\widehat{\pi}=\id$,
$\psi_1=\phi_{\widehat{\alpha}_1 X}$,
$\widehat{\psi}_1=\phi_{\alpha_1 X}$.

\begin{Prop}\label{zero-positive 1} $\psi_1$ and $\widehat{\psi}_1$
are  equivalent. Moreover, $$h(\psi_1)>K_1,\quad
 h(\widehat{\psi}_1)=0.$$
\end{Prop}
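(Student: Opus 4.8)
The plan is to read off all three assertions from Propositions \ref{equivalence}, \ref{positive entropy} and \ref{zero entropy} once we observe that the map $\widehat\pi$ just constructed is a continuous factor map for \emph{both} flows. For the equivalence I would simply check that the identity map of $\Omega$ carries $\psi_1$-orbits onto $\widehat\psi_1$-orbits. On $\zeta^{-1}(\widetilde U_1\cup\widetilde V_1\cup\widetilde W_1)$ the two flows are literally the same, both equal to $\zeta^{-1}\circ\phi_{\widetilde Z}\circ\zeta$; on the complement $\psi_1=\phi_{\widehat\alpha_1 X}$ and $\widehat\psi_1=\phi_{\alpha_1 X}$, and these two flows share one oriented orbit foliation by Proposition \ref{equivalence} (whose proof uses only that $\alpha_1\equiv\widehat\alpha_1$ off $\widetilde V$, that both functions are strictly positive on $\widetilde V\setminus\{p_0\}$, and that they vanish at the same points). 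Since $\widetilde U_1\cup\widetilde V_1\cup\widetilde W_1\subset B^{m+1}(0,4)\subset\zeta(\widetilde U)$, where $\alpha_1\equiv\widehat\alpha_1$, and since by construction $\varphi_1$ agrees with $\widehat\phi$ on $\partial(\widetilde U_1\cup\widetilde V_1\cup\widetilde W_1)$ — so $\phi_{\widetilde Z}$ agrees there with $\phi_{\alpha_1 X}=\phi_{\widehat\alpha_1 X}$ — the two descriptions glue to a single oriented orbit foliation common to $\psi_1$ and $\widehat\psi_1$; hence the identity is a time-orientation-preserving equivalence.

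The map $\widehat\pi$ (the identity off $\zeta^{-1}(\widetilde U_1\cup\widetilde V_1\cup\widetilde W_1)$, and $\zeta^{-1}\circ\widetilde\pi\circ\zeta$ on it) is continuous, because $\widetilde\pi$ is the identity on $\partial(\widetilde U_1\cup\widetilde V_1\cup\widetilde W_1)$, and surjective, because $\widetilde\pi$ maps the torn region onto itself. Using $\widetilde\pi\circ\phi_{\widetilde Z}=\varphi_0\circ\widetilde\pi$ on the torn region, the identification of $\varphi_0$ in the chart $\zeta$ with $\phi_{\alpha_1 X}$ and equally with $\phi_{\widehat\alpha_1 X}$ (valid because $\alpha_1\equiv\widehat\alpha_1$ on $\widetilde U$), and the fact that $\widehat\pi=\mathrm{id}$ at every boundary crossing, a piecewise argument along orbit segments gives
\[
\widehat\pi\circ\psi_1^{\,t}=\phi_{\widehat\alpha_1 X}^{\,t}\circ\widehat\pi,\qquad
\widehat\pi\circ\widehat\psi_1^{\,t}=\phi_{\alpha_1 X}^{\,t}\circ\widehat\pi\qquad(t\in\mathbb{R}).
\]
Thus $\phi_{\widehat\alpha_1 X}$ is a topological factor of $\psi_1$ and $\phi_{\alpha_1 X}$ is a topological factor of $\widehat\psi_1$.

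Since topological entropy does not increase under a factor map, $h(\psi_1)\ge h(\phi_{\widehat\alpha_1 X})>K_1$ by Proposition \ref{positive entropy}. For $\widehat\psi_1$ I would apply Bowen's inequality for factor maps, $h(\widehat\psi_1)\le h(\phi_{\alpha_1 X})+\sup_{y\in\Omega}h\bigl(\widehat\psi_1,\widehat\pi^{-1}(y)\bigr)$. Here $h(\phi_{\alpha_1 X})=0$ by Proposition \ref{zero entropy}, and each fiber $\widehat\pi^{-1}(y)$ is either a single point or a rotated slice $\zeta^{-1}\{(x_1,z):|z|\le\gamma_0(x_1)\}$ with $|x_1|<1$; in the latter case the whole fiber consists of equilibria of $\widehat\psi_1$ (because $\widetilde Z\equiv0$ on $\widetilde U_1\cap\{-1\le x_1\le1\}$), so an $(n,\varepsilon)$-spanning set of it is just an $\varepsilon$-net of a set of uniformly bounded diameter and has cardinality bounded independently of $n$; hence $\limsup_n\frac1n\log$ of that cardinality is $0$, uniformly in $y$. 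Therefore $h(\widehat\psi_1)\le0$, and with $h\ge0$ this yields $h(\widehat\psi_1)=0$. (One can also avoid Bowen's inequality and argue, in the manner of the proof of Proposition \ref{zero entropy}, that every ergodic $\widehat\psi_1$-invariant measure is atomic, since inside the torn region every non-equilibrium orbit either leaves the region or is forward asymptotic to an equilibrium.)

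The only point I expect to need genuine care is the piecewise verification that $\widehat\pi$ intertwines the flows for \emph{all} times: one must control the way an orbit of $\psi_1$ or $\widehat\psi_1$ meets $\widetilde U_1\cup\widetilde V_1\cup\widetilde W_1$ — it enters through the boundary, runs with $x_1$ strictly increasing wherever $\widetilde Z\neq0$, and exits again — so that $\widetilde\pi\circ\phi_{\widetilde Z}=\varphi_0\circ\widetilde\pi$ may be concatenated across the finitely many boundary crossings occurring in any bounded time interval. This rests only on the explicit formula for $\widetilde Z$; everything else is quoted from the propositions above or is a standard property of topological entropy.
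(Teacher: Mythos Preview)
Your argument for the equivalence of $\psi_1$ and $\widehat\psi_1$ and for $h(\psi_1)>K_1$ is exactly the paper's: the identity gives the orbit equivalence, and the factor map $\widehat\pi$ together with Proposition~\ref{positive entropy} gives the entropy lower bound.

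For $h(\widehat\psi_1)=0$ you take a genuinely different route. The paper does \emph{not} invoke Bowen's fiber inequality; instead it argues, in the spirit of Proposition~\ref{zero entropy}, that every ergodic $\widehat\psi_1$-invariant measure is supported on singularities. Concretely, it introduces the transversal sections $H_1=\{x_1=3\}$ and $H_2=\{x_1=-3\}$ inside $B^{m+1}(0,4)$, observes that the first crossing time and crossing point coincide for $\phi_{\widetilde Z}$ and $\varphi_0$ (since $\widetilde\pi$ is the identity near $H_1,H_2$ and $\widetilde\pi\circ\phi_{\widetilde Z}=\varphi_0\circ\widetilde\pi$), and then transports the time-average estimate $\lim_{t\to\infty}t^{-1}J(t,p,\phi_{\alpha X},U_0)=0$ from Proposition~\ref{zero entropy} to $\widehat\psi_1$. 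This forces $\mathrm{supp}(\nu)\subset\{p_0\}\cup\widetilde U$ for every ergodic $\nu$, and since all such measures are atomic, $h(\widehat\psi_1)=0$.

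Your Bowen-inequality argument is correct and arguably cleaner: once you have the semiconjugacy $\widehat\pi\circ\widehat\psi_1^{\,t}=\phi_{\alpha_1 X}^{\,t}\circ\widehat\pi$, the observation that every nontrivial fiber sits inside the equilibrium set $\widetilde U_1\cap\{|x_1|\le 1\}$ of $\widetilde Z$ immediately kills the fiber entropy. The paper's approach, by contrast, avoids importing Bowen's theorem and instead reuses the measure-theoretic machinery already set up for Proposition~\ref{zero entropy}; this has the advantage of being self-contained and of yielding the slightly stronger statement that $\widehat\psi_1$ has no invariant probability measures off $\{p_0\}\cup\widetilde U$, a fact implicitly used again in later steps. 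Your parenthetical remark at the end (``One can also avoid Bowen's inequality and argue, in the manner of the proof of Proposition~\ref{zero entropy}, \dots'') is precisely the paper's actual argument.
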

\begin{proof}
The equivalence of $\psi_1$ and $\widehat{\psi}_1$ are outputted by
their constructions. It is left to estimate the entropies of
$\psi_1$ and $\widehat{\psi}_1$. Observing that
$\phi_{\widehat{\alpha}_1 X}$ is actually a factor
 of $\psi_1$, that is, the following  graph is commutative
 \begin{eqnarray*}\Omega&\stackrel{\psi_1}{\longrightarrow}&\Omega\\
 \,\,\,\widehat{\pi}\big{\downarrow}&&\big{\downarrow}\widehat{\pi}\\
\Omega&\stackrel{\phi_{\widehat{\alpha}_1
X}}{\longrightarrow}&\Omega
 \end{eqnarray*}
$$\widehat{\pi}\circ\psi_1=\phi_{\widehat{\alpha}_1 X}\circ \widehat{\pi}.$$
So by  Theorem 7.2 of \cite{Walters} and Proposition \ref{positive
entropy},
$$h(\psi_1)\geq h(\phi_{\widehat{\alpha}_1 X})>K_1.$$

 Next we will
show that $h(\widehat{\psi}_1)=0$.  For every $p\in \Omega$ and
$t>0$, we need to estimate the proportion of its $t$-time orbit in
$\widetilde{U}$. Consider two transversal sections
$$H_1=\{x=(x_1,x_2,\cdots,x_{m+1})\in B^{m+1}(0,4)\mid x_1=3\}$$
$$H_2=\{x=(x_1,x_2,\cdots,x_{m+1})\in B^{m+1}(0,4)\mid x_1=-3\}.$$
Given $x\in H_1$, denote by $\tau(\phi,x)$ the first time $t>0$
satisfying that $\phi(x,t)\in H_2$. If $\phi(x,t)\notin H_2$ for all
$t>0$, we appoint $\tau(\phi, x)=\infty$. For $x\in H_1\setminus
\{x\mid \omega(x)\in F_0\}$  we claim that
$\phi_{\widetilde{Z}}(x,\tau(\phi_{\widetilde{Z}},x))
=\varphi_{0}(x,\tau(\varphi_{0}, x))$. To see why this is so, one
can use the fact that
$$\tau(\phi_{\widetilde{Z}},x)=\tau(\varphi_{0}, x)\quad
\mbox{and}\quad
\widetilde{\pi}\circ\phi_{\widetilde{Z}}=\varphi_0\circ
\widetilde{\pi}.$$ Exactly, $\widetilde{\pi}=\id$ for $x\in
\Omega\setminus (\widetilde{U}_1\cup \widetilde{V}_1\cup
\widetilde{W}_1)$. Thus
\begin{eqnarray*}\phi_{\widetilde{Z}}(x,\tau(\phi_{\widetilde{Z}},x))&=&\widetilde{\pi}\phi_{\widetilde{Z}}(x,\tau(\phi_{\widetilde{Z}},x))\\
&=&\varphi_{0}(\widetilde{\pi}(x),\tau(\varphi_{0},\widetilde{\pi}(x)))\\&=&\varphi_{0}(x,\tau(\varphi_{0},
x)).\end{eqnarray*} Recalling that for any open set $U_0\subset
\Omega\setminus (\widetilde{U}\cup \widetilde{V})$,
$$\lim_{t\rightarrow +\infty}\frac{J(t,p, \phi_{\alpha X}, U_0)}{t}=0,$$
we know
$$\lim_{t\rightarrow +\infty}\frac{J(t,p, \widehat{\psi}_1, U_0)}{t}=0,$$
which implies that  $\widehat{\psi}_1$ has no invariant measures on
$\Omega\setminus ( \{p_0\}\cup \widetilde{U})$. Therefore
$$h(\widehat{\psi}_1)=0.$$
\end{proof}

{\bf Step 4}\,\,\, Smoothness of the flows $\psi_1$ and
$\widehat{\psi}_1$ .\\

Define a $C^{\infty}$ function $v_0: \mathbb{R}^2\rightarrow
\mathbb{R}$ as follows
\begin{equation*}v_0(x_1,x_2)=\begin{cases}e^{\frac{1}{\gamma^2_0(x_1)-x_2}+\frac{1}{x_2-4}}\,\,&\mbox{for}\,\,\gamma^2_0(x_1)<x_2<4,\\
0\,\,&\mbox{otherwise},
\end{cases}\end{equation*}
which induces a new $C^{\infty}$ function $\widehat{v}_0:
\mathbb{R}^2\rightarrow \mathbb{R}$ given by
\begin{equation*}\widehat{v}_0(x_1,x_2)=\frac{\int_{x_2^2}^{4}
v_0(x_1,s)ds}{\int_{\gamma^2_0(x_1)}^{4} v_0(x_1,s)ds}.
\end{equation*}
We can  verify that $\widehat{v}_0$ satisfies\\

(1) $\widehat{v}_0(x_1,x_2)=0$ for $|x_2|\geq2$,

(2) $\widehat{v}_0(x_1,x_2)=1$ for $|x_2|\leq \gamma_0(x_1)$ and,

(3)
$\frac{\partial^{i+j}\widehat{v}_0}{\partial^ix_1\partial^jx_2}\mid_{x_2=2}
=\frac{\partial^{i+j}\widehat{v}_0}{\partial^ix_1\partial^jx_2}\mid_{x_2=\gamma_0(x_1)}=0$
for $i,j\geq 0$ and $i+j\geq 1$.\\

 Using  the function $\widehat{v}_0$, we
can define $C^{\infty}$ vector field $Z_1$ for  $(x_1,x_2)\in W_1$:
$$Z_1(x)=(\eta(x_1, x_2-\widehat{v}_0(x_1, x_2)\gamma_0(x_1))(1,\widehat{v}_0(x_1,x_2)\gamma_0'(x_1)).$$
Let $Z_1(x)=Z(x)$ for $x\in U_1\cup V_1$.
 Once more, we rotate  $Z_1$ by $O(m)$ to obtain a  vector field $\widetilde{Z}_1$ on $\widetilde{U}_1\cup \widetilde{V}_1\cup
 \widetilde{W}_1$. Then  $\widetilde{Z}_1(x)$  is $C^{\infty}$ on $\widetilde{V}_1\cup
 \widetilde{W}_1$. Moreover, $\widetilde{Z}_1=\widetilde{Z}$ is $C^2$
 on $\widetilde{U}_1\cup \widetilde{V}_1$.
 Consequently, $\widetilde{Z}_1$ is $C^2$ on $\widetilde{U}_1\cup \widetilde{V}_1\cup
 \widetilde{W}_1$.

 Let $\widehat{\psi}_2=\psi_2=\zeta^{-1}\circ\phi_{\widetilde{Z}_1}\circ \zeta$,
 when $p\in \widetilde{U}$; $\psi_2=\psi_1$, $\widehat{\psi}_2=\widehat{\psi}_1$, when $p\in \Omega\setminus
 \widetilde{U}$.
\begin{Prop}\label{zero-positive 2} $\psi_2$ and $\widehat{\psi}_2$
are
equivalent. In addition  $h(\widehat{\psi}_2)=0$ and there is a
constant $C_1>0$ independent of $K_1$ such that
 $$h(\psi_2)>C_1K_1.$$
\end{Prop}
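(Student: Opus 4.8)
The plan is to argue that passing from $\psi_1$ to $\psi_2$ (and from $\widehat{\psi}_1$ to $\widehat{\psi}_2$) changes nothing essential: the modification only happens inside $\widetilde{U}$, and there it consists of a mere change of the transition behavior within $\widetilde{W}_1$ that is again implemented by a continuous orbit-preserving map that fixes the singular set. Concretely, I would first observe that the equivalence of $\psi_2$ and $\widehat{\psi}_2$ is immediate from the construction in exactly the way Proposition \ref{equivalence} and Proposition \ref{zero-positive 1} were argued: the identity map on the $\Omega\setminus\widetilde{U}$ part and the map $\zeta^{-1}\circ\widetilde{\pi}\circ\zeta$ on the $\widetilde{U}$ part send orbits of one flow to orbits of the other while preserving time orientation, because outside the singular segment $F_0$ both $\widetilde{Z}_1$ and the corresponding upper vector field are positive multiples along the same orbit arcs, and the singular points $p_0,p_1$ are fixed by all maps involved.

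Next I would handle $h(\widehat{\psi}_2)=0$. The point is that $\widehat{\psi}_2$ differs from $\widehat{\psi}_1$ only inside $\widetilde{U}$, where the flow was already a factor (via $\widetilde{\pi}$, whose image misses $F_0$) of $\varphi_0=\zeta\circ\phi_{\alpha_1 X}\circ\zeta^{-1}$; the very same projection argument of Proposition \ref{zero-positive 1} applies verbatim with $Z_1$ in place of $Z$, since $Z_1=Z$ on $U_1\cup V_1$ and on $W_1$ the new vector field is still a reparameterized copy of the model flow $\varphi_0$ pushed through $\widehat{v}_0$, so the first-return map from $H_1$ to $H_2$ along $\phi_{\widetilde{Z}_1}$ again equals the first-return map along $\varphi_0$ off $F_0$. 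Hence for every open $U_0\subset\Omega\setminus(\widetilde U\cup\widetilde V)$ one still has $\lim_{t\to+\infty}J(t,p,\widehat{\psi}_2,U_0)/t=0$, so $\widehat{\psi}_2$ carries no invariant measure outside $\{p_0\}\cup\widetilde U$, and the measures inside $\widetilde U$ are atomic; therefore $h(\widehat{\psi}_2)=0$.

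For the lower bound $h(\psi_2)>C_1 K_1$, the idea is that $\psi_2$ still dominates $\phi_{\widehat{\alpha}_1 X}$ up to a bounded time change. One cannot say $\phi_{\widehat{\alpha}_1 X}$ is literally a factor of $\psi_2$ anymore, because the projection $\widehat{\pi}$ was built from $\widetilde{\pi}$ which matched the \emph{old} flow; but the new flow $\phi_{\widetilde{Z}_1}$ and the old $\phi_{\widetilde{Z}}$ have the same orbits, and along each orbit arc inside $\widetilde W_1$ they differ only by the scalar factor coming from $\widehat{v}_0\in[0,1]$ together with the bounded factor $\eta\in(1/20,20)$. Thus there is a measurable additive time change $\theta$ with uniformly bounded derivative relating $\psi_2$ to a flow of which $\phi_{\widehat{\alpha}_1 X}$ \emph{is} a factor; applying the invariance of entropy under equivalence for flows without the relevant degeneracies, or more directly pushing forward the ergodic measure $\widehat{\mu}_1$ of Proposition \ref{positive entropy} through the orbit map and using Abramov's formula, one gets $h(\psi_2)\ge c\,h(\phi_{\widehat{\alpha}_1 X})>cK_1$ for an explicit $c>0$ depending only on the bound $20$ (so $C_1=c$ is independent of $K_1$). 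The main obstacle here — the part I expect to require the most care — is precisely making this ``bounded time change'' rigorous near the torn segment $W_1$: one must check that the reparameterizing function stays bounded away from $0$ and $\infty$ on the relevant orbit pieces (which is where integrability of $1/\eta$ and the shape of $\widehat{v}_0$ enter) and that the resulting semiconjugacy is genuinely continuous across $\partial\widetilde W_1$, so that Theorem 7.2 of \cite{Walters} (or its Abramov-type analogue for suspensions) really does deliver a comparison of topological entropies rather than merely of some measure-theoretic quantity.
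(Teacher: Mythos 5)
Your proposal follows essentially the same route as the paper: equivalence via an explicit orbit-preserving projection read off from the construction, $h(\widehat{\psi}_2)=0$ because all invariant measures remain supported on singularities, and the lower bound on $h(\psi_2)$ via a uniformly bounded return-time comparison across $W_1$ (possible precisely because $W_1$ contains no singularities, giving the constants $C_1<1<C_2$) combined with Abramov's formula on the cross-section $\zeta^{-1}(H_1)$. The one imprecision is the suggestion to ``push forward $\widehat{\mu}_1$'': since $\phi_{\widehat{\alpha}_1X}$ is a factor (not an extension) of $\psi_1$, one cannot transport its measure upward; the paper instead takes, via the variational principle, an ergodic measure $\mu_1$ of $\psi_1$ itself with $h_{\mu_1}(\psi_1)>K_1$, restricts it to the cross-section, and re-suspends with the new return time $T(\psi_2,\cdot)$ --- which is the direction in which the argument actually works and is exactly the rigorous form of the ``bounded time change'' you anticipate needing.
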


\begin{proof}
 Noting the fact
$$\pi_i(\widetilde{Z}_1(x_1,x_2,\cdots,
x_{m+1}))=-\pi_i(\widetilde{Z}_1(-x_1,x_2,\cdots,x_{m+1}))$$ for
$2\leq i\leq m+1$, we deduce
\begin{eqnarray*}&&\pi_i(\phi_{\widetilde{Z}_1}(x,
\tau(x,\phi_{\widetilde{Z}_1})))\\[2mm]&=&x+\int_{0}^{\tau(x,\phi_{\widetilde{Z}_1})}\pi_i(\widetilde{Z}_1(\phi_{\widetilde{Z}_1}(x,s)))ds\\[2mm]
&=&x+\int_{0}^{\frac{\tau(x,\phi_{\widetilde{Z}_1})}{2}}\pi_i(\widetilde{Z}_1(\phi_{\widetilde{Z}_1}(x,s)))ds
+\int_{\frac{\tau(x,\phi_{\widetilde{Z}_1})}{2}}^{\tau(x,\phi_{\widetilde{Z}_1})}\pi_i(\widetilde{Z}_1(\phi_{\widetilde{Z}_1}(x,s)))ds\\[2mm]
&=&
x+\int_{0}^{\frac{\tau(x,\widetilde{\phi}_2)}{2}}\pi_i(\widetilde{Z}_1(\phi_{\widetilde{Z}_1}(x,s)))ds
-\int_{0}^{\frac{\tau(x,\widetilde{\phi}_2)}{2}}\pi_i(\widetilde{Z}_1(\phi_{\widetilde{Z}_1}(x,s)))ds\\[2mm]
&=&x\end{eqnarray*} for $2\leq i\leq m+1$, $x\in H_1\setminus \{p\in
\Omega\mid \omega(p)\in F_0\}$. Define $\widetilde{\pi}_1:
B^{m+1}(0,8)\rightarrow B^{m+1}(0,8)$ as follows
\begin{eqnarray*}&&\widetilde{\pi}_1(y)=(\pi_1(\phi_{\widetilde{Z}_1}( x, t)),\,
x_2,\,\cdots,\,x_{m+1}), \quad y=\phi_{\widetilde{Z}_1}( x,
t),\,\,x\in H_1,\,0\leq t< \tau(\varphi_2, x);\\
&&\widetilde{\pi}_1(y)=y, \quad \mbox{otherwise}.
\end{eqnarray*}  And further define $\widehat{\pi}_1: \Omega\rightarrow \Omega$ by
\begin{equation*}\widehat{\pi}_1=\begin{cases}\zeta^{-1}\circ \widetilde{\pi}_1\circ \zeta, &\mbox{if}\,\,p\in
\widetilde{U};\\
  \id, &\mbox{if}\,\,p\in \Omega\setminus \widetilde{U}.\end{cases}\end{equation*}  Then  $\psi_2$ and
$\widehat{\psi}_2$ are equivalent given by $\widehat{\pi}_1$.

Observing that there is no singularity in $W_1$, we  can choose
$0<C_1<1, C_2>1$ such that
$$C_1<\frac{\tau(\phi_{\widetilde{Z}_1},
x)}{\tau(\phi_{\widetilde{Z}}, x)}<C_2\quad \mbox{for}\,\,x\in
H_1\setminus \{p\mid \omega(p)\in A_0\}.$$

By Proposition \ref{zero-positive 1} and the variational principle,
there exists an ergodic measure $\mu_1$ of $\psi_1$ such that
$$h_{\mu_1}(\psi_1)>K_1.$$ Obviously, $$\supp(\mu_1)\cap
\zeta^{-1}(\widetilde{U}_1)=\emptyset.$$ Let $\nu_1=\mu_1\mid
\zeta^{-1}(H_1)$. Given a flow $\phi$ on $\Omega$, for any $p\in
\zeta^{-1}H_1$, denote by  $T(\phi, p)>0$ the first time of $p$
returning
 $\zeta^{-1}(H_1)$, and let the return map $$R(\phi, p)=\phi(p,T(\phi,p))\in \zeta^{-1}(H_1).$$
By Abarmov Theorem \cite{Abramov},
$$\frac{h_{\nu_1}(R(\psi_1))}{\int_{\zeta^{-1}(H_1)}T(\psi_1, p)d\nu_1}=h_{\mu_1}(\psi_1)>K_1.$$

  For $t>0$ large, define two sequences $\Gamma_i,
\Gamma_i'$ of sub-orbit of $\{\phi(p,s)\mid 0\leq s\leq t\}$ as
follows. We begin with $p$. Let $\Gamma_i$ be the sequence of
minimal intervals  whose left endpoint
 lies in $\zeta^{-1}(H_1)$ and right endpoint lies in $\zeta^{-1}(H_2)$. Let $\Gamma_i'$ be the sequence
 of minimal
intervals  whose left endpoint lies in $\zeta^{-1}(H_2)$ and right
endpoint lies in $\zeta^{-1}(H_1)$.

For each interval $\Gamma$, let $|\Gamma|$ denote the time of
sub-orbit $\Gamma$.

Since $\psi_2=\psi_1$ for $p\notin \widetilde{U}$, we have
\begin{eqnarray*}\frac{T(\psi_2,
p)}{T(\psi_1, p)}&=&\frac{\sum \Gamma_i(\psi_2)+\sum
\Gamma_i'(\psi_2)}{\sum \Gamma_i(\psi_1)+\sum
\Gamma_i'(\psi_1)}\\[2mm]&\leq&\frac{\sum \Gamma_i(\psi_2)+C_2\sum
\Gamma_i'(\psi_1)}{\sum \Gamma_i(\psi_1)+\sum
\Gamma_i'(\psi_1)}\\[2mm]&\leq&C_2,
\end{eqnarray*}and on the other hand,
\begin{eqnarray*}\frac{\sum \Gamma_i(\psi_2)+\sum
\Gamma_i'(\psi_2)}{\sum \Gamma_i(\psi_1)+\sum \Gamma_i'(\psi_1)}\geq
\frac{C_1\sum \Gamma_i(\psi_2)+\sum \Gamma_i'(\psi_2)}{\sum
\Gamma_i(\psi_1)+\sum \Gamma_i'(\psi_1)}\geq C_1.
\end{eqnarray*}
So, $$C_1<\frac{T(\psi_2, p)}{T(\psi_1, p)}<C_2 \quad
\mbox{for}\quad \,p\in \zeta^{-1} (H_1)\setminus \{p\in \Omega\mid
\omega(p)\in F_0\}.$$ We can define a measure
  $\mu_2$ by
$$\int_{\Omega}g d\mu_2:=\int_{\zeta^{-1}(H_1)}\int_0^{T(\psi_2,p)}g(\psi_2(p,t)) dtd\nu_1, \,\,\forall g \in C^0(\Omega).$$
Using Lemma \ref{suspend of measures}, $\mu_2$ is an ergodic
invariant measure of $\psi_2$. Furthermore, $R(\psi_1)=R(\psi_2)$
together with Abarmov Theorem \cite{Abramov}  yields that
\begin{eqnarray*}h_{\mu_2}(\psi_2)&=&\frac{h_{\nu_1}(R(\psi_2))}{\int_{\zeta^{-1}(H_1)}T(\psi_2,
p)d\nu_1}\\[2mm]
&=&\frac{h_{\nu_1}(R(\psi_1))}{\int_{\zeta^{-1}(H_1)}T(\psi_1,
p)d\nu_1}\frac{\int_{\zeta^{-1}(H_1)}T(\psi_1,
p)d\nu_1}{\int_{\zeta^{-1}(H_1)}T(\psi_2,
p)d\nu_1}\\[2mm]
&\geq& C_1h_{\mu_1}(\psi_1)\\[2mm]
&>&C_1K_1.\end{eqnarray*}
 Therefore
$$h(\psi_2)\geq h_{\mu_2}(\psi_{2})>C_1K_1.$$

Finally, since all invariant measures of $\widehat{\psi}_2$ are
supported on singularities  so $h(\widehat{\psi}_2)=0$.
\end{proof}

{\bf Step 5}\,\,\,\,Embed the two dimensional flows $\phi_{Z_1}$ and
$\phi_{Z_2}$ into $\Omega$.\\

At most taking a scallion of the coordinate $(\widetilde{U},\zeta)$,
we assume that $B^{m+1}(0,3)\subset \zeta(\widetilde{U}_1)$. In this
subsection, all modifications will be completed in $B^{m+1}(0,3)$.
Denote
\begin{eqnarray*}D_1&=&\{x\in
\mathbb{R}^{m+1}\mid\,x_1^2+x_2^2\leq1\,\,\mbox{and}\,\,\,\sum_{i=3}^{m+1}x_i^2=0\},\\
D_2&=&\{x\in \mathbb{R}^{m+1}\mid\,\,\,\,\sum_{i=1}^{m+1}x_i^2<2\}.
\end{eqnarray*}
We first choose $C^{\infty}$ smooth functions
$\varsigma,\widehat{z}_1, \widehat{z}_2, \widehat{\beta}_1,
\widehat{\beta}_2(x) : \widetilde{U}_1\rightarrow \mathbb{R}$ such
that
\begin{eqnarray*}\varsigma(x)\begin{cases}=0\,\,\,&x\in D_1,\\
>0\,\,\,&x\in D_2\setminus D_1,\\
=0\,\,\,&x\in \widetilde{U}_1\setminus
D_2;\end{cases}\end{eqnarray*}
\begin{eqnarray*}\widehat{z}_1(x)=\begin{cases}-x_2+\alpha(x_1^2+x_2^2)x_1\,\,\,&x\in D_1,\\
0\,\,\,&x\in \widetilde{U}_1\setminus D_2;\end{cases}\end{eqnarray*}
\begin{eqnarray*}\widehat{z}_2(x)=\begin{cases}-x_1+\alpha(x_1^2+x_2^2)x_2\,\,\,&x\in D_1,\\
0\,\,\,&x\in \widetilde{U}_1\setminus D_2;\end{cases}\end{eqnarray*}
\begin{eqnarray*}\widehat{\beta}_1(x)=\begin{cases}\beta_1(x)\,\,\,&x\in D_1,\\
\widehat{\beta}_1(x)>0\,\,\,&x\in D_2\setminus D_1\\
 1\,\,\,&x\in \widetilde{U}_1\setminus
D_2;\end{cases}\end{eqnarray*}
\begin{eqnarray*}\widehat{\beta}_2(x)=\begin{cases}\beta_2(x)\,\,\,&x\in D_1,\\
\widehat{\beta}_2(x)>0\,\,\,&x\in D_2\setminus D_1\\
=1\,\,\,&x\in \widetilde{U}_1\setminus
D_2.\end{cases}\end{eqnarray*}

Let
\begin{eqnarray*}\widehat{Z}_1(x)&=&\widehat{\beta}_1(x)(\widehat{z}_1(x),\widehat{z}_2(x),
\varsigma(x),
\cdots,\varsigma(x) ),\\
\widehat{Z}_2(x)&=&\widehat{\beta}_2(x)(\widehat{z}_1(x),\widehat{z}_2(x),
\varsigma(x),\cdots,\varsigma(x) ).\end{eqnarray*}

Noting that $\varsigma(x)>0$ for $x\in D_2\setminus D_1$, we know
that there is no nonwandering points in $D_2\setminus D_1$ for both
$\phi_{\widehat{Z}_1}$ and $\phi_{\widehat{Z}_2}$. Hence, all
invariant measures on $D_2 $  are supported on periodic orbits in
$D_1$, which implies no entropy production in $\widetilde{U}_1$.
Define
\begin{eqnarray*}\widehat{X}_1(p)=\begin{cases}(d\zeta^{-1})\widehat{Z}_1(\zeta(p))\,\,\,\,&p\in \zeta^{-1}(\widetilde{U}_1),\\
\frac{d\psi_2(p,t)}{dt}\mid_{t=0}\,\,&p\in \Omega\setminus
\zeta^{-1}(\widetilde{U}_1);\end{cases}\end{eqnarray*}
\begin{eqnarray*}\widehat{X}_2(p)=\begin{cases}(d\zeta^{-1})\widehat{Z}_2(\zeta(p))\,\,\,\,&p\in \zeta^{-1}(\widetilde{U}_1),\\
\frac{d\widehat{\psi}_2(p,t)}{dt}\mid_{t=0}\,\,&p\in \Omega\setminus
\zeta^{-1}(\widetilde{U}_1).\end{cases}\end{eqnarray*} Then
$\zeta\circ\phi_{\widehat{X}_1}\circ
\zeta^{-1}\mid_{D_1}=\phi_{Z_1}$,
$\zeta\circ\phi_{\widehat{X}_2}\circ
\zeta^{-1}\mid_{D_1}=\phi_{Z_2}$. By Theorem \ref{Main theorem1} it
holds that
$$EP(\phi_{\widehat{X}_1})=\infty
\,\,\,\mbox{and}\,\,\,EP(\phi_{\widehat{X}_2})=0.$$ Finally, let
$\psi=\phi_{\widehat{Z}_2}$, $\widehat{\psi}=\phi_{\widehat{Z}_1}$
and  take $K_1C_1>K$. We conclude that
$$EP(\psi)=0 \,\,\,\mbox{and}\,\,\,EP(\widehat{\psi})=\infty,$$
$$h(\psi)>K \,\,\,\mbox{and}\,\,\,h(\widehat{\psi})=0.$$

\section{Final Remarks  }
While our results give a very complete answer to the degeneration of
the growth of periodic orbits  for two-dimensional equivalent flows
 in the category of $C^{\infty}$ some interesting problems remain,
that we pose here

\begin{Que}Is it possible
to construct an analytic vector field or analytic map with
$EP=\infty$? Our method of proof clearly cannot be made analytic
since $\alpha_0$ is  flat at 0.  Noting that for any $k$-order
polynomial map $P_k$ on $\mathbb{R}^l$, any $n$ periodic point $x$
of $P_k$ satisfies
$$P_k^n(x)-x=0.$$ By the
Bezout theorem the number of isolated solutions is at most $k^{nl}$,
which implies $$EP(P_k)\leq
\limsup_{n\rightarrow+\infty}\frac{1}{n}\log(k^{nl})=l\log
k<\infty.$$  This fact make us tend to consider  the answer to be
negative.

\end{Que}
\begin{Que}Is the extreme $EP=0$ or $EP=\infty$ or the sign of $EP$ with finite value  preserved by
  orbit equivalent analytic flows? The flows $\psi$ and
$\widehat{\psi}$ are not analytic since $\omega_1$ and
$\widehat{\omega}_1$ are flat at $p_0$.
\end{Que}
\begin{Que}Is the extreme $EP=0$ or $EP=\infty$ or the sign of $EP$ with finite value  preserved by
   equivalent differential flows with only hyperbolic orbits? Recall
that a periodic orbit $\{\phi(x,t)\mid 0\leq t\leq T\}$ of period
$T$ is called hyperbolic  if the linearization $D\phi(\cdot\,,\,T)$
at $x$ has no  eigenvalue in the unity circle except the flow
direction. \end{Que}

We also have questions  concerning entropy $h$ and its relation to
$EP$ in smooth regularity.
\begin{Que}Is the value zero or the sign of entropy   preserved by
   equivalent analytic flows?
\end{Que}
Furthermore
\begin{Que}Are there two  equivalent $C^{\infty}$ or even analytic flows, one of
which has positive topological entropy  and zero exponential growth
rate of periodic orbits, in contrast,  the other has zero
topological entropy and super-exponential growth of periodic orbits?
In our constructions, $\eta$ is only $C^{2}$ and can't be improved
due to the appearance of square root when we use rotations of vector
fields on  $U_1\cup V_1$.
\end{Que}

\begin{Que}Besides entropy and the exponential growth of periodic orbits, are there other
objects invariant or decreasing for equivalent flows? Actually
physical measures \cite{Saghin-Sun-Vargas} and Lyapunov exponents
\cite{Gelfert-Motter} could decrease for equivalent flows.
\end{Que}


\begin{thebibliography}{10}



\bibitem{Abraham}R. Abraham, S. Smale, Nongenericity of $\Omega$-stability, Global
analysis I, {\it Proc. Symp. Pure Math. AMS} 14, 5--8, 1970.

\bibitem{Abramov-Rohlin}L. M. Abramov and V. A. Rohlin,  Entropy of a skew product of
mappings with invariant measure. {\it Vestnik Leningrad. Univ}. 17 ,
no. 7, 5--13, 1962.

\bibitem{Abramov}L. M. Abramov, On the entropy of a flow, {\it Dok. Akad. Nauk. SSSR}.
128, 873--875, 1959;  {\it Amer. Math. Soc, Trans}. 49, 167--170,
1966.

\bibitem{AM} M. Artin and B. Mazur, Periodic orbits, {\it Annals of Math}, 81,
82--99, 1965.


\bibitem{Birkhoff}G. D. Birkhoff, Nouvelles recherches sur les syst`emes
dynamiques. {\it Memoriae Pont. Acad. Sci. Novi Lyncaei } 1 (1935),
85--216 and Collected Math. Papers, vol. II, 530--659.

\bibitem{BDF} C. Bonatti, L.~J. D{\'\i}az, T. Fisher,
Super-exponential growth of the number of periodic orbits inside
homoclinic classes. {\it Discrete Contin. Dyn. Syst}. 20(3)
589--604, 2008.

\bibitem{Bonatti-Gan-Wen}C. Bonatti, S. Gan, L. Wen, On the existence of non-trivial
homoclinic classes. {\it Ergodic Theory \& Dynam. Systems} 27,
1473--1508, 2007.



\bibitem{Bowen3}R. Bowen,
 Topological entropy and Axiom A, {\it Global Analysis}
(Berkeley, CA, 1968, {\it Proc. Sympos. Pure Math}. 14, Amer. Math.
Soc., Providence, RI, 23--41, 1970.

\bibitem{Bowen}R. Bowen and P. Walters, Expansive one-parameter flows, {\it J.
Diff. Eq}., 12, 180--193, 1972.

\bibitem{Bowen2}R. Bowen,
Equilibrium states and the ergodic theory of Anosov diffeomorphisms,
Springer Lecture Notes in Math. 470, 1975.

\bibitem{Bowen1}R. Bowen, entropy and the fundamental group. The structure
of attractors in dynamical systems (Proc. Conf., North Dakota State
Univ., Fargo, ND, 1977), 21--29, Lectures Notes in Math. 668,
Springer-Verlag, New York, 1978.

\bibitem{BFF}M. Boyle, D. Fiebig, and U. Fiebig. Redidual entropy, conditional
entropy, and subshift covers. {\it Forum Math}., 14, 713--757, 2002.

\bibitem{Burguet2}D. Burguet, $C^2$ surface diffeomorphisms have symbolic
extensions,  to appear in {\it Invent. Math}.

\bibitem{Buzzi}J. Buzzi, Intrinsic ergodicity for smooth
interval maps. {\it Isreal J. Math}, 100, 125--161, 1997.

\bibitem{Crovisier}S. Crovisier, Birth of homoclinic intersections: a model for the
central dynamics of partially hyperbolic systems, {\it Annals of
Math }. 172. 3., 1641--1677, 2010.

\bibitem{DM}T. Downarowicz and A. Maass,
Smooth interval maps have symbolic extensions, {\it Invent. Math},
176(3), 617--636, 2009.

\bibitem{DN}T. Downarowicz, S. Newhouse, Symbolic extensions and
smooth dynamical systems, {\it Invent. math}, 160(3), 453--499,
2005.

\bibitem{Furstenberg} H. Furstenburg, Poincar{\'{e}} recurrence and number theory, {\it
Bulltin\, of A.M.S}. 5, 211--234, 1981.

\bibitem{Gelfert-Motter}K. Gelfert, A. E. Motter,  (Non)Invariance of dynamical quantities
for orbit equivalent flows, {\it Commun. Math. Phys}. 300, 411--433,
2010.


\bibitem{Guckenhei-Willians}J. Guckenheimer, R. F. Willians, Structural stability of Lorenz
attractors, {\it publ. Math. IHES} 50: 59--72, 1979.




\bibitem{Willians}J. Guckenheimer, R. F. Willians, The structure of the Lorenz
attractors, {\it publ. Math. IHES } 50: 73--99, 1979.




\bibitem{Liao-Sun-Tian}G. Liao, W. Sun, X. Tian, Metric entropy and the number of periodic
points, {\it Nonlinearity 23},  1547--1558, 2010.

\bibitem{Herman} M.R. Herman, Construction d¡¯un diffeomorphisme minimal
d¡¯entropie topologique non nulle, {\it Erg. Th \& Dyn. Systems}, 1,
65--76, 1981.

\bibitem{Kaloshin1}V. Y. Kaloshin, Generic diffeomorphisms with superexponential
growth of number of periodic orbits. {\it Comm. Math. Phys}. 211(1),
253--271, 2000.

\bibitem{Kaloshin2}V. Y. Kaloshin, An extension of the Artin-Mazur theorem, {\it Annals
of Math}, 150, 729--741, 1999.

\bibitem{Katok} A. Katok, Lyapunov exponents, entropy and periodic orbits for
diffeomorphisms, {\it Publ. IHES}, 51, 137--173, 1980.

\bibitem{STLiao} S.T. Liao, On the stability conjecture, {\it Chinese Ann. Math}. 1, no. 1, 9--30, 1980.

\bibitem{Maruyama} G. Maruyama, Theory of stationary processes and ergodic theory,
A lecture at the Symposium held at Kyoto Univ., 1965.

\bibitem{Newhouse}S. Newhouse, Nondensity of axiom $A$ on $\mathbb{S}^2$, Global analysis I,
{\it Proc. Symp. Pure Math. AMS} 14, 191--202, 1970.

\bibitem{New89}
S.~Newhouse.
\newblock Continuity properties of entropy.
\newblock {\em Annals of Math.}, 129:215--235, 1990.


\bibitem{Ohno} T. Ohno, A weak equivalence and topological entropy, {\it Publ. RIMS,
Kyoto Univ}. 16, 289--298, 1980.


\bibitem{Palis}J. Palis, Open questions leading to a global perspective in
dynamics,  {\it Nonlinearity} 21,  T37--T43, 2008.


\bibitem{Po1}H. Poincar{\'e}, Sur le probl{\'e}me des trois corps et les
equations de la dynamique. {\it Acta Math}. 13, 1--270, 1890.


\bibitem{Pujals-Sambarino}E. Pujals, M. Sambarino, Homoclinic tangencies and hyperbolicity
for surface diffomorphisms, {\it Ann. of Math}. 151, no. 3,
961-1023, 2000.

\bibitem{SunVar} W. Sun, E. Vargas, Entropy of flows, revisited, {\it Bol. Soc.
Brasil. Mat}. 30, 315--333, 1999.

\bibitem{Sun} W. Sun, Entropy of orthonormal $n$-frame flows, {\it Nonlinearity}, 14, no. 4, 829--842, 2001.

\bibitem{Saghin-Sun-Vargas}R. Saghin, W. Sun, E. Vargas, On Dirac physical measures for transitive
flows, {\it Communications in Mathematical Physics}: 298(3),
741--756, 2010.

\bibitem{Smale}S. Smale, Diffeomorphisms with many periodic points. {\it Differential
and combinatorial topology}, Princeton Univ. Press, 63--80, 1965.

\bibitem{SunYoungZhou} W. Sun, T. Young, Y. Zhou, Topological entropies
of equivalent smooth flows. {\it Trans. Amer. Math. Soc}. 361, no.
6, 3071--3082, 2009.

\bibitem{SunZhang} W. Sun, C. Zhang,
Extreme growth rates of periodic orbits in equivalent flows, to
appear in {\it Proc. Amer. Math. Soc}.


\bibitem{Sun-Zhang-Zhou} W. Sun, C. Zhang, Y. Zhou,
 Extreme entropy versus extreme growth rates of periodic orbits in equivalent
flows, Preprint, 2010


\bibitem{Thomas1} R. Thomas, Topological entropy of fixed-point free flows, {\it Trans.
Amer. Math. Soc}., 319, 601--618, 1990.


\bibitem{Thomas2} R. Thomas, Entropy of expansive flows, {\it Erg. Th \& Dyn. Systems}, 7, 611--625, 1987.

\bibitem{Totoki} H. Totoki, Time changes of flows, {\it Mem. Fac. Sci. Kyushu
Univ}. Ser. A, 20, 27--55, 1966.

\bibitem{LSYoung}L. S. Young, Entropy of continuous flows on compact 2-manifolds,  {\it Topology} 16 (4), 469--471, 1977.

\bibitem{Walters} P. Walters, An introduction to ergodic theory, Springer-Verlag,
1982.

\end{thebibliography}
\end{document}